\pgfplotsset{compat=newest}
\theoremstyle{plain}
\newtheorem{theorem}{Theorem}
\theoremstyle{plain}
\newtheorem{lemma}[theorem]{Lemma}
\theoremstyle{plain}
\theoremstyle{plain}
\newtheorem{remark}[theorem]{Remark}
\theoremstyle{plain}
\theoremstyle{plain}
\theoremstyle{plain}
\DeclareMathOperator{\sgn}{sgn}
\newcommand{\ue}{u_\varepsilon}
\newcommand{\uh}{u_{\varepsilon,h}}
\newcommand{\R}{\mathds{R}}
\newcommand{\N}{\mathds{N}}
\newcommand{\abs}[1]{\lvert#1\rvert}
\newcommand{\norm}[1]{\lVert#1\rVert}
\begin{document}
	
\mathtoolsset{showonlyrefs=true}

\title{Finite element error estimates in $L^2$ for regularized discrete approximations to the obstacle problem
\thanks{The authors gratefully acknowledge the support by the Deutsche Forschungsgemeinschaft (DFG) through the International Research Training Group IGDK 1754 ``Optimization and Numerical Analysis for Partial Differential Equations with Nonsmooth Structures". The first author additionally acknowledges support from the graduate program TopMath of the Elite Network of the state of Bavaria and the TopMath Graduate Center of TUM Graduate School at Technische Universität M\"unchen. He is also a scholarship student of the Studienstiftung des deutschen Volkes.}}

\date{}

\author{Dominik Hafemeyer\footnotemark[2], Christian Kahle\footnotemark[2], Johannes Pfefferer\footnotemark[2]}

\maketitle

\renewcommand{\thefootnote}{\fnsymbol{footnote}}
\footnotetext[2]{Technical University of Munich, Department of Mathematics, Chair of Optimal Control, Boltzmannstra\ss e 3, 85748 Garching by Munich, Germany (\texttt Dominik.Hafemeyer@ma.tum.de, Christian.Kahle@ma.tum.de, pfefferer@ma.tum.de).}

\begin{abstract}
This work is concerned with quasi-optimal a-priori finite element error estimates for the obstacle problem in the $L^2$-norm. The discrete approximations are introduced as solutions to a finite element discretization of an accordingly regularized problem. The underlying domain is only assumed to be convex and polygonally or polyhedrally bounded such that an application of point-wise error estimates results in a rate less than two in general.
The main ingredient for proving the quasi-optimal estimates is the structural and commonly used assumption 
that the obstacle is inactive on the boundary of the domain.
Then localization techniques are used to estimate the global $L^2$-error by a local error in the inner 
part of the domain, where higher regularity for the solution can be assumed, 
and a global error for the Ritz-projection of the solution, which can be estimated by standard techniques.
We validate our results by numerical examples.

\noindent\textbf{Keywords:} Obstacle problem, finite element discretization, a priori error analysis, localization techniques\\
\noindent\textbf{AMS Subject Identification 2010: 35J20 35J50 65G99 65N15}
\end{abstract}

\section{Introduction}
This paper is concerned with error estimates for discrete approximations to the solution of the obstacle problem. Concerning the underlying domain, we only assume that it is 
polygonally or polyhedrally bounded and convex. Under a structural and commonly used assumption on the obstacle we show that the sequence of discrete approximations possesses a convergence rate close to two in the $L^2$-norm. Thus, we obtain convergence results similar to those of the Ritz-projection of the solution. This is the main contribution of the present paper.

Before going into further details, let us review common discretization concepts for the obstacle problem and related convergence results from the literature. 
The first approach consists of a direct discretization of the variational inequality (corresponding to the obstacle problem) based on linear finite elements.
For this approach, it is well known that the resulting sequence of discrete solutions exhibits a convergence rate of one in the $H^1$-norm if the domain is convex. The corresponding proof has already become classical in the meanwhile, see \cite{Falk1974}.
Essentially, it is based on the variational formulations of the problems (continuous and discrete) and standard interpolation error estimates.
In contrast, a universal approach for the derivation of optimal error estimates in the $L^2$-norm is unknown. 
It has even been shown in \cite{ChristofMeyer2018} that a duality argument, similar to that for the $L^2$-error of the Ritz-projection, 
cannot be established as the $H^2$-regularity of the solution in this situation is not sufficient in order to guarantee second order convergence in $L^2$.
To circumvent this issue, it is possible to consider point-wise error estimates since such estimates also imply estimates in $L^2$ due to the H\"older inequality. 
In \cite{Nitsche1977,MeyerThomas2013,Nochetto20140449} it is shown that a convergence rate of two (times $\log$-factors) can be achieved in $L^\infty$. 
This result requires sufficiently smooth data, and interior angles, that are small enough, 
in order to guarantee sufficiently smooth solutions due to the presence of corner and edge singularities. 
For instance, in two dimensional polygonal domains, it is well known that in general the interior angles must be less or equal to $\pi/2$ for the validity of those rates. 
For larger interior angles the convergence rates are reduced. In addition, based on the point-wise estimates, it is proven in \cite{MeyerThomas2013} that a convergence rate close
to two can be expected in $L^2$ if the domain is only assumed to be convex. However, this result requires an obstacle which is sufficiently smooth and, more importantly, which is inactive on the boundary. It is also crucial to note that all the point-wise estimates (and hence the $L^2$ estimate in \cite{MeyerThomas2013}) discussed so far only hold if a discrete maximum principle holds for the discrete solutions (at least this is the state of the art). For instance, this can be ensured by weakly acute finite element meshes. However, in practice, this is a serious restriction on the construction of finite element meshes, especially in the three dimensional case.

A second strategy to obtain approximations to the solution of the obstacle problem can be summarized as follows: 
First appropriately regularize the obstacle problem (for instance we use a Moreau-Yosida type relaxation) to get a semilinear partial differential equation, 
where the nonlinearity depends on the regularization parameter, and then truncate the regularized equation and discretize it by linear finite elements.
Typically, the regularization parameter is chosen dependent on the mesh parameter in order to balance both error contributions. This approach is pursued in the present paper. In case that the boundary is smooth enough and the data are regular enough, it is shown in \cite{Nochetto_1988_Sharp_Linfty_semilinear_elliptic_Free_boundaries} that by this type of discretization a convergence rate of two (times $\log$-factors) in $L^\infty$, and hence in $L^2$, can be achieved. 
Moreover, the proof can be extended to polygonal and polyhedral domains if the interior angles are small enough such that the appearing corner and edge singularities are mild enough. For larger interior angles the convergence rates in $L^\infty$ are again reduced. This is in agreement with corresponding discretization error estimates for semilinear partial differential equations, where the nonlinearity does not additionally depend on a (mesh parameter dependent) regularization parameter. In this case, a convergence rate of two can also be proven in $L^2$ if the underlying domains are only assumed to be convex. Of course, this raises the question if such a result (or at least a comparable one) is also valid for the approximations of the present discretization strategy. Typically, in order to obtain error estimates in $L^2$, a duality argument is applied. However, a straightforward application of the duality argument in the $L^2$-setting is not promising here as an inappropriate coupling between regularization parameter and mesh parameter cannot be avoided in this case.
Nevertheless, under the commonly used structural assumption that the obstacle is sufficiently smooth and inactive on the boundary, we show that a convergence rate of two (times $\log$-factors) in $L^2$ can be established in convex polygonal/polyhedral domains, which represents the main result of the paper.

Our proof heavily relies on the fact that in the continuous and discrete setting the obstacle is inactive in a non-empty strip at the boundary. This is deduced by basic point-wise estimates and the structural assumption that the obstacle is inactive on the boundary. Then, by using in a certain sense new results for locally discrete harmonic functions, we are able to split the discretization error in $L^2$ into two error terms. The first one is nothing else than an $L^2$-error for the Ritz-projection in the domain, where we can rely on standard estimates from the literature. The second error contribution represents an error in the interior of the domain, where the solution enjoys more regularity. In order to appropriately bound this term, we employ techniques from \cite{Nochetto_1988_Sharp_Linfty_semilinear_elliptic_Free_boundaries} (introduced there for global $L^\infty$-error estimates). However, we always take care on the local support of this error, which lies in the interior of the domain. A more detailed outline of the proof is given at the beginning of Section~\ref{sec:E}.

The paper is organized as follows: In Section~\ref{sec:C} we introduce the variational formulation to the obstacle problem and a Moreau-Yosida type relaxation to this problem. Moreover, we state basic properties of the corresponding solutions, such as regularity results, and we establish point-wise convergence of these solutions to each other. Some of the results are already known in a similar fashion in the literature. Nevertheless, we state the basic ideas in order to be self-contained. Moreover, and more importantly, this also enables us to ensure that those results do not depend on a smooth boundary in general, which is very often assumed in the literature. 
After having introduced the discrete problem in Section~\ref{sec:D}, we consider the $L^2$-error estimates in Section~\ref{sec:E}. There we start with giving a short roadmap for the remainder of the paper. This is followed by the observation that in all problems, which we consider, the obstacle is inactive in a strip at the boundary, which relies on point-wise estimates and the aforementioned structural assumption on the obstacle. Then we establish error estimates for locally discrete harmonic functions, which are new in a certain sense compared to the results from the literature. Using these estimates, standard results for the Ritz-projection, and duality arguments in the $L^\infty$-$L^1$-setting, we establish at the end of Section~\ref{sec:E} the main result of the paper, the discretization error estimates in $L^2$ in convex polygonal and polyhedral domains. Finally, in Section~\ref{sec:N} we state numerical examples which underline our theoretical findings.

Before closing the introduction, we emphasize that in all what follows $C$ denotes a generic positive constant which is always independent of the regularization parameter $\varepsilon$ and the mesh parameter $h$.

\section{The Continuous and the Regularized Problem}
\label{sec:C}
We start with introducing some notation which is used in the sequel of the paper. We let $\Omega \subset \R^N$, $N \in \{2,3\}$, denote an open, convex and polygonally/polyhedrally bounded domain with boundary $\partial\Omega$. The Sobolev spaces are classically denoted by $W^{k,p}(\Omega)$ with $k\in\N_0$ and $p\in [1,\infty]$. In case that $p=2$, we also use the notation $H^k(\Omega)$. The norms in these spaces are denoted by $\norm{\cdot}_{W^{k,p}(\Omega)}$ and $\norm{\cdot}_{H^{k}(\Omega)}$, respectively. In addition, $H^k_0(\Omega)$ denotes the completion of all functions in $C^\infty_0(\Omega)$ (the space of infinitely often differentiable functions with compact support in $\Omega$) with respect to $\norm{\cdot}_{H^k(\Omega)}$. Again classically, we denote the norm in $L^p(\Omega)=W^{0,p}(\Omega)$ by $\norm{\cdot}_{L^p(\Omega)}$. For the inner product in $L^2(\Omega)$ we use the notation $(\cdot,\cdot)$. The dual space to $H^1_0(\Omega)$ is denoted by $H^{-1}(\Omega)$ and we use the notation $\langle\cdot,\cdot\rangle$ to indicate the corresponding duality pairing.

Let us now formulate the problem which we are dealing with. For $f \in L^\infty(\Omega)$ and $\psi\in W^{2,\infty}(\Omega)$,
which satisfies $\psi\le0$ on $\partial\Omega$, we consider the variational
problem: Find $u \in \mathcal K_\psi:= \{v\in H^1_0(\Omega)\,|\, v \geq \psi \mbox{ a.e. in } \Omega\}$ such that
\begin{align}
  (\nabla u,\nabla (v- u)) &\geq  (f,v-u)\quad\forall v \in \mathcal K_\psi.
  \label{eq:C:VI}
\end{align}
That is, we discuss the classical obstacle problem for a function $u\in K_\psi\subset H^1_0(\Omega)$ with an obstacle $\psi \in
W^{2,\infty}(\Omega)$. By classical means it is possible to show that there exists a unique solution to this problem, 
see for instance \cite[Chap. II, Theorem 2.1]{KinderlehrerStampacchia_VariationalInequalities}. 
For the existence of a unique solution our regularity assumptions on the domain, the obstacle and the data can certainly be relaxed. Let us again emphasize that the assumptions stated above are crucial for our numerical analysis. In Section~\ref{sec:E} we even assume that the obstacle is inactive on the boundary.

Next, let us recall a well known reformulation of the obstacle problem which can be deduced by using concepts from convex analysis, see for instance \cite[Sections 1 and 2]{Barbu2010}.
Let $I_{\mathcal{K}_\psi}$ denote the indicator functional of the convex set $\mathcal{K}_\psi$. Then, the subdifferential $\partial I_{\mathcal{K}_\psi}$ of $I_{\mathcal{K}_\psi}$ at a point $u\in \mathcal{K}_\psi$ is given by
\[
\partial I_{\mathcal{K}_\psi}(u)=\left\{v\in H^{-1}(\Omega)\,\big|\, \langle v,u-w\rangle\geq0\, \forall w\in \mathcal{K}_\psi\right\}.
\]
Moreover, a function $u\in H^1_0(\Omega)$ solves the obstacle problem \eqref{eq:C:VI} if and only if there exists a Lagrange multiplier $\beta(u-\psi) \in \partial I_{\mathcal{K}_\psi}(u)$ such that
\begin{align} \label{eq:C:slackform}
(\nabla u, \nabla v) + \langle\beta(u-\psi), v\rangle = (f,v) \quad \forall v\in H^1_0(\Omega).
\end{align}
\begin{remark}
	Note that, as the solution $u$ to the obstacle problem is unique, the equation \eqref{eq:C:slackform} uniquely determines $\beta(u-\psi)\in H^{-1}(\Omega)$ by the relation
	\begin{align*}
	\beta(u-\psi)=f+\Delta u \in H^{-1}(\Omega).
	\end{align*}
\end{remark}

Next, we introduce a regularized problem, where we replace the Lagrange multiplier $\beta(u-\psi)$ by a suitable relaxation. Our approach follows that of
\cite{Nochetto_1988_Sharp_Linfty_semilinear_elliptic_Free_boundaries}. A similar one is also used in
\cite[Chap. IV, Sec. 5]{KinderlehrerStampacchia_VariationalInequalities}.

For $\varepsilon > 0$ we substitute $\beta(u-\psi)$ by the monotonically increasing, and  globally Lipschitz continuous function
\begin{align}
	\label{eq:R:betaEpsUncut}
	\beta_\varepsilon(s) :=
	\begin{cases}
		0, & \mbox{if } s\geq 0,\\
		s/\varepsilon & \mbox{if }s<0, 
	\end{cases}
\end{align}
and consider the semi-linear partial differential equation
\begin{equation}
	\label{eq:R:PDE}
	(\nabla \ue, \nabla v) + (\beta_\varepsilon(\ue-\psi),v) = (f,v) \quad \forall v \in H^1_0(\Omega)
\end{equation}
as an approximation of \eqref{eq:C:VI}. Due to the above formulated assumptions on $\beta_\varepsilon$, $\psi$ and $f$, existence of a unique solution
$\ue\in H^1_0(\Omega)\cap C(\bar\Omega)$ to \eqref{eq:R:PDE} follows for any $\varepsilon>0$ by arguments due to Browder and Minty, see for instance \cite[Theorem 4.7]{MR2583281}. We also note that this is an outer approximation or Moreau--Yosida relaxation, \cite{Glowinski1981}, of the
variational inequality \eqref{eq:C:VI}.

The following two lemmas about regularity issues for the obstacle problem and its regularized version are in the spirit of 
\cite[Chap. IV, Lemma 5.1 and Theorem 5.2]{KinderlehrerStampacchia_VariationalInequalities}.

\begin{lemma} \label{lem:R:boundedregularized}
	Let $u_\varepsilon \in H^1_0(\Omega)$ for $\varepsilon\in(0,1]$ denote the solution to \eqref{eq:R:PDE}. Then, $\beta_\varepsilon(u_\varepsilon-\psi)$ belongs to $L^\infty(\Omega)$ fulfilling
	\begin{equation}\label{eq:boundbetaeps}
		\lVert \beta_\varepsilon(u_\varepsilon-\psi) \rVert_{L^\infty(\Omega)} \leq \lVert f + \Delta \psi \rVert_{L^\infty(\Omega)}.
	\end{equation}
	Further, the solution $u_\varepsilon$ possesses the higher regularity $H^2(\Omega)\cap H^1_0(\Omega)$ and satisfies
	\[
		\norm{u_\varepsilon}_{H^2(\Omega)}\le C(\norm{f}_{L^\infty(\Omega)}+\norm{\Delta \psi}_{L^\infty(\Omega)})
	\]
	with a constant $C>0$ independent of $\varepsilon$.
\end{lemma}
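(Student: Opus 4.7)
The plan is to establish the $L^\infty$ bound on $\beta_\varepsilon(u_\varepsilon-\psi)$ by a truncation/testing argument in the spirit of a weak maximum principle, and then deduce the $H^2$ regularity from classical regularity theory for the Laplacian on convex polygonal/polyhedral domains applied to the linearised equation.

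For the first claim, I would set $M:=\norm{f+\Delta\psi}_{L^\infty(\Omega)}$, $w:=u_\varepsilon-\psi\in H^1(\Omega)\cap C(\bar\Omega)$ (continuity follows from $u_\varepsilon\in H^1_0(\Omega)\cap C(\bar\Omega)$ and $\psi\in W^{2,\infty}(\Omega)$), and choose as a test function
\[
\phi := (-w-\varepsilon M)^+\in H^1(\Omega).
\]
Since $w=-\psi\ge 0$ on $\partial\Omega$ while $-w-\varepsilon M\le -\varepsilon M<0$ there, continuity implies $\phi=0$ on $\partial\Omega$, so $\phi\in H^1_0(\Omega)$. Plugging $\phi$ into \eqref{eq:R:PDE}, using $\psi\in W^{2,\infty}(\Omega)$ to integrate $(\nabla\psi,\nabla\phi)=-(\Delta\psi,\phi)$ by parts, and rewriting in terms of $w$ yields
\[
(\nabla w,\nabla\phi)+(\beta_\varepsilon(w),\phi)=(f+\Delta\psi,\phi).
\]
On the set $\{\phi>0\}=\{w<-\varepsilon M\}$, we have $\nabla\phi=-\nabla w$, so $(\nabla w,\nabla\phi)=-\int_{\{\phi>0\}}|\nabla w|^2\le 0$, and moreover $\beta_\varepsilon(w)=w/\varepsilon<-M\le f+\Delta\psi$ pointwise there. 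Rearranging to $(\beta_\varepsilon(w)-(f+\Delta\psi),\phi)=-(\nabla w,\nabla\phi)\ge 0$ forces the non-positive integrand on the left to vanish a.e., whence $|\{\phi>0\}|=0$. Thus $w\ge -\varepsilon M$ a.e., i.e.\ $|\beta_\varepsilon(w)|=\max(-w,0)/\varepsilon\le M$ a.e., which is \eqref{eq:boundbetaeps}.

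For the second claim, I would read \eqref{eq:R:PDE} as $-\Delta u_\varepsilon = f-\beta_\varepsilon(u_\varepsilon-\psi)$ in the sense of distributions with $u_\varepsilon\in H^1_0(\Omega)$. Thanks to the first part, the right-hand side lies in $L^\infty(\Omega)\subset L^2(\Omega)$ with
\[
\norm{f-\beta_\varepsilon(u_\varepsilon-\psi)}_{L^2(\Omega)}\le C\bigl(\norm{f}_{L^\infty(\Omega)}+\norm{\Delta\psi}_{L^\infty(\Omega)}\bigr).
\]
Since $\Omega$ is a bounded convex polygonal/polyhedral domain, the standard $H^2$-regularity result for the homogeneous Dirichlet problem for $-\Delta$ on such domains (Grisvard) applies and provides $u_\varepsilon\in H^2(\Omega)\cap H^1_0(\Omega)$ with
\[
\norm{u_\varepsilon}_{H^2(\Omega)}\le C\norm{\Delta u_\varepsilon}_{L^2(\Omega)}\le C\bigl(\norm{f}_{L^\infty(\Omega)}+\norm{\Delta\psi}_{L^\infty(\Omega)}\bigr),
\]
with a constant independent of $\varepsilon$, as claimed.

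The main subtlety I expect is in the first step, namely verifying that the test function $\phi$ really belongs to $H^1_0(\Omega)$ despite $w$ not vanishing on $\partial\Omega$; this is where the assumption $\psi\le 0$ on $\partial\Omega$ together with the continuity of $u_\varepsilon$ is essential, ensuring that $\{-w-\varepsilon M>0\}$ is compactly contained in $\Omega$. The rest of the argument is a clean application of the sign structure of $\beta_\varepsilon$ and a standard convex-domain regularity result, both of which are $\varepsilon$-independent and therefore give the uniform estimate.
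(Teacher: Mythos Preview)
Your argument is correct. The second part (the $H^2$ regularity via Grisvard) is identical to the paper's. For the first part, however, you take a genuinely different route.

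The paper follows the classical $L^p$-bootstrapping argument from Kinderlehrer--Stampacchia: it tests \eqref{eq:R:PDE} with $-|\beta_\varepsilon(u_\varepsilon-\psi)|^{p-1}$, uses the monotonicity of $\beta_\varepsilon$ to discard the gradient term, obtains $\|\beta_\varepsilon(u_\varepsilon-\psi)\|_{L^p(\Omega)}\le\|f+\Delta\psi\|_{L^p(\Omega)}$ for every finite $p$, and then sends $p\to\infty$. Your approach is instead a Stampacchia-type truncation: testing with $(-w-\varepsilon M)^+$ directly yields $w\ge -\varepsilon M$ without any limiting procedure. Your argument is shorter and arguably more elementary; the paper's has the minor by-product of producing the full family of $L^p$ estimates along the way, though these are not used elsewhere. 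One small remark: your phrase ``$\{-w-\varepsilon M>0\}$ is compactly contained in $\Omega$'' is a bit stronger than what you actually need (and fails in the borderline case $M=0$, $\psi=0$ on part of $\partial\Omega$); all that matters is that the continuous function $\phi$ vanishes on $\partial\Omega$, which follows already from $-w-\varepsilon M\le -\varepsilon M\le 0$ there and suffices for $\phi\in H^1_0(\Omega)$.
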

\begin{proof}
		To prove the boundedness of $\beta_\varepsilon(u_\varepsilon-\psi)$ in $L^\infty(\Omega)$, one can proceed completely analogously to the proof of \cite[Chap. IV, Lemma 5.1]{KinderlehrerStampacchia_VariationalInequalities}. For the convenience of the reader and also to see the exact regularity requirements, let us quickly summarize the most essential steps of the proof. Since $u_\varepsilon \in H^1_0(\Omega)\cap C(\bar\Omega)$ and $\psi\in W^{2,\infty}(\Omega)$ with $\psi|_{\partial\Omega}\le0$ we have that $\beta_\varepsilon(u-\psi)|_{\partial\Omega}=0$ and $\beta_\varepsilon(u-\psi) \in H^1(\Omega) \cap L^\infty(\Omega)$ (according to \cite[Chap. II, Theorem A.1]{KinderlehrerStampacchia_VariationalInequalities}). Thus, we may choose $-(-\beta_\varepsilon(u_\varepsilon-\psi))^{p-1} = -|\beta_\varepsilon(u_\varepsilon-\psi)|^{p-1}$, which then belongs to $H^1_0(\Omega)\cap L^\infty(\Omega)$ as well (definitely for $p>2$), as a test function in \eqref{eq:R:PDE}. This yields
		\begin{align*}
			\lVert \beta_\varepsilon(u_\varepsilon-\psi) \rVert_{L^p(\Omega)}^p& = (\nabla (u_\varepsilon-\psi), \nabla ( (-\beta_\varepsilon(u_\varepsilon-\psi) )^{p-1} )+(f+\Delta \psi,-(-\beta_\varepsilon(u_\varepsilon-\psi))^{p-1})\\
			&=(1-p) (\nabla (u_\varepsilon-\psi),(- \beta_\varepsilon(u_\varepsilon-\psi))^{p-2} \beta_\varepsilon^\prime(u_\varepsilon-\psi) \nabla (u_\varepsilon-\psi) )\\
			&\quad-(f+\Delta \psi,\abs{\beta_\varepsilon(u_\varepsilon-\psi)}^{p-1}),
		\end{align*}
		where we used the integration by parts formula and the chain rule.		
		Then, employing the monotonicity of $\beta_\varepsilon$ together with $\beta_\varepsilon \leq 0$ and the H\"older inequality results in
		\[
			\lVert \beta_\varepsilon(u_\varepsilon-\psi) \rVert_{L^p(\Omega)}^p\le \lVert f + \Delta \psi \rVert_{L^p(\Omega)} \lVert \beta_\varepsilon(u_\varepsilon-\psi) \rVert_{L^p(\Omega)}^{p-1}.
		\]
		Finally, after having divided by $\lVert \beta_\varepsilon(u_\varepsilon-\psi) \rVert_{L^p(\Omega)}^{p-1}$, we take the limit $p\rightarrow\infty$ and obtain the desired bound for $\beta_\varepsilon(u_\varepsilon-\psi)$. As a consequence, the higher regularity can be deduced from \cite[Theorem 3.2.1.2]{Grisvard2011_EllipticProblems_nonsmoothDomains} after having sent $\beta_\varepsilon(u_\varepsilon-\psi)$ to the right hand side.
\end{proof}

\begin{lemma} \label{lem:R:multiplierboundedness}
	Let $u \in \mathcal K_\psi$ und $u_\varepsilon \in H^1_0(\Omega)$ denote the solutions to \eqref{eq:C:VI} and \eqref{eq:R:PDE}, respectively. Then, we have
	\begin{align}
	u_\varepsilon &\xrightarrow{\varepsilon \rightarrow 0} u \text{ weakly in } H^2(\Omega)\text{ and strongly in } C(\bar{\Omega}),\notag\\
	\beta_\varepsilon(u_\varepsilon-\psi) &\xrightarrow{\varepsilon \rightarrow 0} \beta(u-\psi) \text{ weakly in } L^2(\Omega),\label{eq:weakLagrange}
	\end{align}
	and
	\[
		\lVert \beta(u-\psi) \rVert_{L^\infty(\Omega)} \leq \lVert f + \Delta \psi \rVert_{L^\infty(\Omega)}.
	\]
\end{lemma}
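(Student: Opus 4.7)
The plan is to use the uniform bounds from Lemma~\ref{lem:R:boundedregularized} to extract weakly convergent subsequences, and then to identify the respective limits as $u$ and $\beta(u-\psi)$ via the variational characterizations of the two problems.

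First, by Lemma~\ref{lem:R:boundedregularized} the family $\{\ue\}$ is uniformly bounded in $H^2(\Omega)\cap H^1_0(\Omega)$ and $\{\beta_\varepsilon(\ue-\psi)\}$ is uniformly bounded in $L^\infty(\Omega)$, hence also in $L^2(\Omega)$. Reflexivity yields a subsequence (not relabeled) with $\ue\rightharpoonup u^*$ weakly in $H^2(\Omega)$ and $\beta_\varepsilon(\ue-\psi)\rightharpoonup \mu$ weakly in $L^2(\Omega)$. Since $N\le3$ and $\Omega$ is bounded, the embedding $H^2(\Omega)\hookrightarrow C(\bar\Omega)$ is compact, so $\ue\to u^*$ strongly in $C(\bar\Omega)$, and in particular strongly in $H^1_0(\Omega)$. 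Moreover, since $\beta_\varepsilon(\ue-\psi)=\min(0,\ue-\psi)/\varepsilon$, the uniform bound \eqref{eq:boundbetaeps} gives
\[
\norm{(\ue-\psi)_-}_{L^\infty(\Omega)}\le \varepsilon\,\norm{f+\Delta\psi}_{L^\infty(\Omega)},
\]
so $u^*\ge\psi$ a.e.\ in $\Omega$ and therefore $u^*\in\mathcal K_\psi$.

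Next, to identify $u^*=u$, for any $v\in\mathcal K_\psi$ I would test \eqref{eq:R:PDE} with $v-\ue$ and decompose
\[
(\beta_\varepsilon(\ue-\psi),v-\ue)=(\beta_\varepsilon(\ue-\psi),v-\psi)+(\beta_\varepsilon(\ue-\psi),\psi-\ue).
\]
The first summand is non-positive since $\beta_\varepsilon(\ue-\psi)\le 0$ and $v\ge\psi$, and the second equals $-\varepsilon^{-1}\norm{(\ue-\psi)_-}_{L^2(\Omega)}^2\le0$. Hence $(\nabla\ue,\nabla(v-\ue))\ge(f,v-\ue)$. Using the strong $H^1$-convergence to pass to the limit (so that $\norm{\nabla\ue}_{L^2(\Omega)}^2\to\norm{\nabla u^*}_{L^2(\Omega)}^2$), I obtain $(\nabla u^*,\nabla(v-u^*))\ge(f,v-u^*)$ for all $v\in\mathcal K_\psi$. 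Uniqueness of the solution to \eqref{eq:C:VI} forces $u^*=u$, and the usual subsequence argument then shows that the entire sequence converges.

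To identify $\mu=\beta(u-\psi)$, I would pass to the limit in \eqref{eq:R:PDE} to obtain $(\nabla u,\nabla v)+(\mu,v)=(f,v)$ for all $v\in H^1_0(\Omega)$. Combining weak $L^2$-convergence of the multipliers with strong $L^2$-convergence of $\ue$ in the complementarity inequality above yields $(\mu,v-u)\le 0$ for every $v\in\mathcal K_\psi$, i.e.\ $\mu\in\partial I_{\mathcal K_\psi}(u)$. By the remark following \eqref{eq:C:slackform}, this uniquely characterises $\mu=\beta(u-\psi)$. The $L^\infty$-bound then follows from weak-$*$ lower semicontinuity: by Banach--Alaoglu a subsequence of the multipliers converges weakly-$*$ in $L^\infty(\Omega)$, the limit must agree with $\mu$ by testing against $L^1$-functions, and so
\[
\norm{\beta(u-\psi)}_{L^\infty(\Omega)}\le\liminf_{\varepsilon\to 0}\norm{\beta_\varepsilon(\ue-\psi)}_{L^\infty(\Omega)}\le\norm{f+\Delta\psi}_{L^\infty(\Omega)}.
\]

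The main obstacle is the identification of the weak $L^2$-limit $\mu$ with the Lagrange multiplier $\beta(u-\psi)$: this hinges on combining the non-positivity and complementarity structure of $\beta_\varepsilon$ with strong convergence of $\ue$ so that the cross-term $(\beta_\varepsilon(\ue-\psi),\ue)$ passes to $(\mu,u)$ in the limit, which is precisely what supplies the variational inequality characterising the subdifferential.
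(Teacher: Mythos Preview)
Your argument is correct and follows the same compactness-plus-identification strategy as the paper, but you organise the steps differently. The paper works throughout with the slack formulation \eqref{eq:C:slackform}: it passes to the limit in \eqref{eq:R:PDE}, proves $\hat u\in\mathcal K_\psi$ by a contradiction argument, and then verifies $\hat\beta\in\partial I_{\mathcal K_\psi}(\hat u)$ via the monotonicity of $\beta_\varepsilon$. You instead first show $u^*\in\mathcal K_\psi$ directly from the quantitative bound $\|(\ue-\psi)_-\|_{L^\infty(\Omega)}\le\varepsilon\|f+\Delta\psi\|_{L^\infty(\Omega)}$ (cleaner than the paper's contradiction), then identify $u^*=u$ by passing to the limit in the variational inequality \eqref{eq:C:VI} itself, and only afterwards identify the multiplier. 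Your route is marginally more elementary because it avoids manipulating the subdifferential until the very end; the paper's route is more uniform in that it never leaves the equation form.

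One slip to fix: strong convergence in $C(\bar\Omega)$ does \emph{not} imply strong convergence in $H^1_0(\Omega)$, so your ``in particular'' is not justified as written. You do need strong $H^1$-convergence to pass to the limit in $(\nabla\ue,\nabla(v-\ue))$, since with only weak $H^1$-convergence the sign of $\|\nabla\ue\|^2$ goes the wrong way. The correct justification is the compact embedding $H^2(\Omega)\hookrightarrow H^1(\Omega)$ (Rellich--Kondrachov), which together with weak $H^2$-convergence gives the required strong $H^1$-convergence. With that correction your proof stands.
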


\begin{proof}
	We proceed similar to the proof of \cite[Chap. IV, Theorem 5.2]{KinderlehrerStampacchia_VariationalInequalities}. However, we rely on the reformulation \eqref{eq:C:slackform} of the obstacle problem instead of considering the variational inequality \eqref{eq:C:VI}. Due to the uniform boundedness of $u_\varepsilon$ in $H^2(\Omega)\cap H^1_0(\Omega)$ and $\beta_\varepsilon(u_\varepsilon-\psi)\in L^2(\Omega)$ according to Lemma \ref{lem:R:boundedregularized}, we get the existence of functions $\hat u\in H^2(\Omega)\cap H^1_0(\Omega)$ and $\hat \beta\in L^2(\Omega)$ such that
	\begin{align*}
		u_\varepsilon &\xrightarrow{\varepsilon \rightarrow 0} \hat u \text{ weakly in } H^2(\Omega),\\
		\beta_\varepsilon(u_\varepsilon-\psi) &\xrightarrow{\varepsilon \rightarrow 0} \hat \beta \text{ weakly in } L^2(\Omega).
	\end{align*}
	Actually, we only get the convergence of subsequences at first. However, as the limits will be unique (the unique solution $u$ of the obstacle problem and the corresponding unique Lagrange multiplier $\beta(u-\psi)$), we will have the convergence of the whole sequences, and therefore we skip this detail in the following. Next, we show that $\hat u$ and $\hat \beta$ fulfill \eqref{eq:C:slackform}. Due to the weak convergence, we already know that
	\[
		(\nabla \hat u, \nabla v) + (\hat \beta, v) = (f,v) \quad \forall v\in H^1_0(\Omega).
	\]
	Thus, as the duality pairing between $H^{-1}(\Omega)$ and $H^1_0(\Omega)$ is compatible with the inner product in $L^2(\Omega)$, it only remains to show that $\hat \beta \in \partial I_{\mathcal{K}_\psi}(\hat u)$.
	In a first step towards this, we show that $\hat u$ belongs to $\mathcal{K}_\psi$, since then the subdifferential $\partial I_{\mathcal{K}_\psi}(\hat u)$ at $\hat u$ can be characterized as
	\[
		\partial I_{\mathcal{K}_\psi}(\hat u)=\left\{v\in H^{-1}(\Omega)\,\big|\, \langle v,\hat u-w\rangle\geq0\quad \forall w\in \mathcal{K}_\psi\right\}.
	\]
	As $H^2(\Omega)$ is compactly embedded in $C(\bar \Omega)$, we get that
	\[
		u_\varepsilon \xrightarrow{\varepsilon \rightarrow 0} \hat u \text{ strongly in } C(\bar\Omega).
	\]
	Assume next that there is a set $O\subset\Omega$ with $|O|>0$ and $\delta>0$ such that $\hat u \leq \psi - \delta$. By the strong convergence in $C(\bar\Omega)$ we have for $\varepsilon$ small enough that $u_\varepsilon \leq \psi -\delta/2$. Thus, by the Cauchy-Schwarz inequality and the definition of $\beta_\varepsilon$, we deduce
	\begin{align*}
	\lVert \beta_\varepsilon(u_\varepsilon - \psi) \rVert_{L^2(O)} \lVert u_\varepsilon -\psi\rVert_{L^2(O)} &\geq   (\beta_\varepsilon(u_\varepsilon - \psi), u_\varepsilon - \psi )_{L^2(O)}=\frac{1}{\varepsilon} \norm{u_\varepsilon-\psi}_{L^2(O)}^2 \ge |O| \frac{\delta^2}{4\varepsilon},
	\end{align*}
	which is a contradiction to the boundedness of the left hand side of this inequality (according to Lemma \ref{lem:R:boundedregularized}) if we send $\varepsilon$ to zero. As a consequence, we have shown $\hat u\in\mathcal{K}_\psi$. Now, we show that $\hat \beta$ belongs to $\partial I_{\mathcal{K}_\psi}(\hat u)$. By introducing appropriate intermediate functions, we get for any $w\in\mathcal{K}_\psi$, which implies $\beta_\varepsilon(w-\psi)=0$, that
	\begin{align*}
		\int_{\Omega}\hat \beta(\hat u-w)&=\int_\Omega(\hat \beta-\beta_\varepsilon(u_\varepsilon-\psi))(\hat u-w)+\int_{\Omega}\beta_\varepsilon(u_\varepsilon-\psi)(\hat u - u_\varepsilon)\\
		&\quad+\int_{\Omega}(\beta_\varepsilon(u_\varepsilon-\psi)-\beta_\varepsilon(w-\psi))((u_\varepsilon-\psi)-(w-\psi))\\
		&\ge\int_\Omega(\hat \beta-\beta_\varepsilon(u_\varepsilon-\psi))(\hat u-w)+\int_{\Omega}\beta_\varepsilon(u_\varepsilon-\psi)(\hat u - u_\varepsilon),
	\end{align*}
	where we used the monotonicity of $\beta_\varepsilon$ in the last step. Sending $\varepsilon$ to zero implies
	\[
		\int_{\Omega}\hat \beta(\hat u-w)\geq 0\quad \forall w\in\mathcal{K}_\psi,
	\]
	which means that $\hat{\beta}\in\partial I_{\mathcal{K}_\psi}(\hat u)$, and hence $u=\hat u$ and $\beta(u-\psi)=\hat \beta$. Finally, the estimate for the Lagrange multiplier $\beta(u-\psi)$ in $L^\infty(\Omega)$ is a direct consequence of the weak convergence \eqref{eq:weakLagrange} and the estimate \eqref{eq:boundbetaeps} due to the weakly lower semi-continuity of the norm.
\end{proof}
\begin{remark}\label{rem:character}
	Due to the convergence results of Lemma \ref{lem:R:multiplierboundedness} it is possible to further characterize $\beta(u-\psi)$. For any non-negative function $v\in C^{\infty}_0(\Omega)$ we have according to the definition of $\beta_\varepsilon$
	\[
		0\ge \lim_{\varepsilon\rightarrow0}\int_{\Omega} \beta_\varepsilon(u_\varepsilon-\psi)v=\int_{\Omega} \beta(u-\psi)v.
	\]
	Thus, by means of the fundamental lemma of variational calculus, we get $\beta(u-\psi)\leq 0$, and hence, $\beta(u-\psi)(u-\psi)\leq 0$ almost everywhere, such that the definition of $\beta_\varepsilon$ implies
	\[
		0\le \lim_{\varepsilon\rightarrow0}\int_{\Omega} \beta_\varepsilon(u_\varepsilon-\psi)(u_\varepsilon-\psi)=\int_{\Omega} \beta(u-\psi)(u-\psi)\leq 0
	\]
	or, equivalently,
	\[
		\norm{ \beta(u-\psi)(u-\psi)}_{L^1(\Omega)}=0.
	\]
	To summarize, this means in the almost everywhere sense
	\[
		\beta(u-\psi)\begin{cases}
						=0&\text{if } u-\psi>0,\\
						\le0&\text{if } u-\psi=0.
					 \end{cases}		
	\]
\end{remark}

The next theorem is concerned with the regularization error in $L^\infty(\Omega)$ and the related convergence rate. 
It basically reflects the results of \cite[Theorem 2.1]{Nochetto_1988_Sharp_Linfty_semilinear_elliptic_Free_boundaries}. Nevertheless, we recall the proof in order to ensure that
it does not depend on the smoothness of the boundary since in \cite{Nochetto_1988_Sharp_Linfty_semilinear_elliptic_Free_boundaries} a smooth boundary is assumed.

\begin{theorem}\label{thm:R:LinfError}
	Let $u \in \mathcal{K}_\psi$ and  $\ue \in H^{1}_0(\Omega)$
	denote the solutions to \eqref{eq:C:VI} and \eqref{eq:R:PDE}, respectively. Then there is the estimate
	\begin{align*}
		\|u-\ue\|_{L^\infty(\Omega)} \leq \varepsilon \|f+\Delta\psi\|_{L^\infty(\Omega)}.
	\end{align*}
\end{theorem}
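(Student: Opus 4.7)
The plan is to establish the two-sided inequality $0 \le u - u_\varepsilon \le \varepsilon\|f+\Delta\psi\|_{L^\infty(\Omega)}$ almost everywhere by testing the variational inequality \eqref{eq:C:VI} and the regularized equation \eqref{eq:R:PDE} against carefully chosen truncations. The whole argument is purely algebraic once one exploits the a priori bound $\|\beta_\varepsilon(u_\varepsilon-\psi)\|_{L^\infty(\Omega)}\le M$, with $M:=\|f+\Delta\psi\|_{L^\infty(\Omega)}$, from Lemma~\ref{lem:R:boundedregularized}.

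\textbf{Step 1: $u_\varepsilon\le u$.} I would first show the one-sided comparison. The function $v:=\max(u,u_\varepsilon)=u+(u_\varepsilon-u)_+$ lies in $H^1_0(\Omega)$ (both functions have zero trace) and satisfies $v\ge u\ge\psi$, hence $v\in\mathcal{K}_\psi$. Plugging $v$ into \eqref{eq:C:VI} yields $(\nabla u,\nabla(u_\varepsilon-u)_+)\ge(f,(u_\varepsilon-u)_+)$, while testing \eqref{eq:R:PDE} with $(u_\varepsilon-u)_+\in H^1_0(\Omega)$ gives an equation. Subtracting produces
\[
\|\nabla(u_\varepsilon-u)_+\|_{L^2(\Omega)}^2+(\beta_\varepsilon(u_\varepsilon-\psi),(u_\varepsilon-u)_+)\le 0.
\]
On $\{u_\varepsilon>u\}$ one has $u_\varepsilon>u\ge\psi$, so the definition \eqref{eq:R:betaEpsUncut} makes the second term vanish, and Poincar\'e forces $(u_\varepsilon-u)_+=0$.

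\textbf{Step 2: $u\le u_\varepsilon+\varepsilon M$.} From $\|\beta_\varepsilon(u_\varepsilon-\psi)\|_{L^\infty(\Omega)}\le M$ and the explicit form of $\beta_\varepsilon$ on the negative half-line, one reads off $u_\varepsilon\ge\psi-\varepsilon M$ pointwise. Consequently the function $v:=u-(u-u_\varepsilon-\varepsilon M)_+=\min(u,u_\varepsilon+\varepsilon M)$ dominates $\psi$ and, since $u-u_\varepsilon-\varepsilon M=-\varepsilon M<0$ on $\partial\Omega$, the test function $\varphi:=(u-u_\varepsilon-\varepsilon M)_+$ belongs to $H^1_0(\Omega)$, so $v\in\mathcal{K}_\psi$ is admissible in \eqref{eq:C:VI}. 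Using this $v$ in \eqref{eq:C:VI} and testing \eqref{eq:R:PDE} with $\varphi$, one subtracts to obtain
\[
\|\nabla\varphi\|_{L^2(\Omega)}^2\le (\beta_\varepsilon(u_\varepsilon-\psi),\varphi),
\]
and since $\beta_\varepsilon\le 0$ and $\varphi\ge 0$, the right-hand side is non-positive. Again by Poincar\'e, $\varphi\equiv 0$, yielding the desired pointwise bound.

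Combining the two steps gives the claimed estimate. There is no serious obstacle here; the only subtlety worth flagging is checking admissibility of the cut-off test functions in the obstacle set $\mathcal{K}_\psi$, which in Step~2 crucially relies on the lower bound $u_\varepsilon\ge\psi-\varepsilon M$ extracted from the $L^\infty$ control of $\beta_\varepsilon(u_\varepsilon-\psi)$. This is exactly the place where the convexity/polyhedral geometry of $\Omega$ plays no role, so the estimate is obtained without any smoothness assumption on $\partial\Omega$, as promised in the remark preceding the theorem.
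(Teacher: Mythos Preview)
Your proof is correct and genuinely different from the paper's. The paper does not use truncations; instead it works with the error equation obtained by subtracting \eqref{eq:R:PDE} from \eqref{eq:C:slackform}, tests with $e_\varepsilon^{2p+1}$ for integer $p$, applies Poincar\'e to get an $L^{2(p+1)}$ estimate of the form $\|e_\varepsilon\|_{L^{2(p+1)}(\Omega)}\le C^{1/(2(p+1))}\varepsilon^{(2p+1)/(2(p+1))}\|\beta(u-\psi)\|_{L^{2(p+1)}}$, and then sends $p\to\infty$, invoking the $L^\infty$ bound on the Lagrange multiplier $\beta(u-\psi)$ from Lemma~\ref{lem:R:multiplierboundedness}. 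The intermediate step requires a case distinction on the sets $\{u>\psi\}$, $\{u=\psi,\,u_\varepsilon>\psi\}$, and $\{u=\psi,\,u_\varepsilon\le\psi\}$, together with Remark~\ref{rem:character}, to control the sign of $(\beta_\varepsilon(u_\varepsilon-\psi)-\beta(u-\psi))e_\varepsilon^{2p+1}$.

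Your comparison argument is more elementary: it avoids the limit $p\to\infty$, does not need the characterization of $\beta(u-\psi)$ from Remark~\ref{rem:character} or even the existence of $\beta(u-\psi)$ as an $L^\infty$ function, and uses Lemma~\ref{lem:R:boundedregularized} only once (to extract $u_\varepsilon\ge\psi-\varepsilon M$). It also yields the extra monotonicity $u_\varepsilon\le u$, which the paper's proof does not isolate. The price is that your argument is specific to the present penalty $\beta_\varepsilon(s)=\min(s,0)/\varepsilon$ (Step~1 needs $\beta_\varepsilon\equiv 0$ on $[0,\infty)$ and Step~2 needs $\beta_\varepsilon\le 0$), whereas the $L^p$-to-$L^\infty$ technique in the paper adapts to more general monotone penalizations as in \cite{Nochetto_1988_Sharp_Linfty_semilinear_elliptic_Free_boundaries}. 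Both proofs are boundary-regularity free, as you note.
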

\begin{proof}
	Let us abbreviate $e_\varepsilon=u-u_\varepsilon$. Having in mind the $L^2$-regularity of $\beta(u-\psi)$ from Lemma \ref{lem:R:multiplierboundedness}, we obtain from \eqref{eq:C:slackform} and \eqref{eq:R:PDE}
	\[
		(\nabla e_\varepsilon,\nabla v ) = (\beta_\varepsilon(\ue-\psi)-\beta(u-\psi),v) \quad \forall v \in H^1_0(\Omega).
	\]
	Next, we observe that the function $e_\varepsilon^{2p+1}$, where $p$ is an arbitrary positive integer, belongs to $H^1_0(\Omega)$ if $u$ and $u_\varepsilon$ belong to $H^1_0(\Omega)\cap L^\infty(\Omega)$. The $L^\infty(\Omega)$ regularity is given by Lemma~\ref{lem:R:multiplierboundedness}. Thus, we may choose it as a test function in the above variational equation. This yields employing the chain rule several times
	\begin{align*}
		(\beta_\varepsilon(\ue-\psi)-\beta(u-\psi),e_\varepsilon^{2p+1})&=(\nabla e_\varepsilon,\nabla e_\varepsilon^{2p+1})=\frac{2p+1}{(p+1)^2}\norm{\nabla e_\varepsilon^{p+1}}_{L^2(\Omega)}^2\\
		&\ge C\frac{2p+1}{(p+1)^2}\norm{e_\varepsilon^{p+1}}_{L^2(\Omega)}^2=C\frac{2p+1}{(p+1)^2}\norm{e_\varepsilon}_{L^{2(p+1)}(\Omega)}^{2(p+1)},
	\end{align*}
	where we applied the Poincar\'e inequality in between. Notice, that the constant from the Poincar\'e inequality is independent of $p$.
	We now estimate the term on the left hand side. Due to the definition of $\beta_\varepsilon$ and Remark \ref{rem:character} we notice that $\beta_\varepsilon(u-\psi)=\beta(u-\psi)=0$ almost everywhere if $u-\psi>0$. Then, due to the monotonicity of $\beta_\varepsilon$ we get
	\[
		0\ge(\beta_\varepsilon(u_\varepsilon-\psi)-\beta(u-\psi))(u-u_\varepsilon)\quad \text{a.e. in }\{x\in\Omega\,|\, u(x)-\psi(x)>0\}.
	\]
	According to the definition of $\beta_\varepsilon$ and Remark \ref{rem:character}, we also obtain
	\[
		0\ge(\beta_\varepsilon(u_\varepsilon-\psi)-\beta(u-\psi))(u-u_\varepsilon)\quad \text{a.e. in }\{x\in\Omega\,|\, u_\varepsilon(x)-\psi(x)>0\, \wedge \, u(x)=\psi(x)\}.
	\]
	Next, let us define $I=\{x\in \Omega\,|\, u_\varepsilon(x)-\psi(x)\le 0\, \wedge \, u(x)=\psi(x)\}$ and $e_\psi=\psi-u_\varepsilon \geq 0$.
	Then, combining the previous results yields
	\[
		(\beta_\varepsilon(\ue-\psi)-\beta(u-\psi),e_\varepsilon^{2p+1})\le (\beta_\varepsilon(\ue-\psi)-\beta(u-\psi),e_\psi^{2p+1})_{L^2(I)}.
	\]
	Due to the definition $\beta_\varepsilon(u_\varepsilon-\psi)$ and Remark \ref{rem:character}, this also implies
	\[
		\frac1\varepsilon\norm{e_\psi}_{L^{2(p+1)}(I)}^{2(p+1)}+(\beta_\varepsilon(\ue-\psi)-\beta(u-\psi),e_\varepsilon^{2p+1})\le (\abs{\beta(u-\psi)},e_\psi^{2p+1})_{L^2(I)}.
	\]
	By means of the H\"older and the Young inequality, we get
	\begin{align*}
		(\abs{\beta(u-\psi)},e_\psi^{2p+1})_{L^2(I)}&\le \norm{\beta(u-\psi)}_{L^{2(p+1)}(I)}\norm{e_\psi}_{L^{2(p+1)}(I)}^{2p+1}\\
		&\le\frac{1}{2(p+1)} \varepsilon^{2p+1}\norm{\beta(u-\psi)}_{L^{2(p+1)}(I)}^{2(p+1)}+\frac{2p+1}{2(p+1)}\frac{1}{\varepsilon}\norm{e_\psi}_{L^{2(p+1)}(I)}^{2(p+1)} \\
		&\le\frac{1}{2(p+1)} \varepsilon^{2p+1}\norm{\beta(u-\psi)}_{L^{2(p+1)}(I)}^{2(p+1)}+ \frac{1}{\varepsilon}\norm{e_\psi}_{L^{2(p+1)}(I)}^{2(p+1)},
	\end{align*}
	such that
	\[
		(\beta_\varepsilon(\ue-\psi)-\beta(u-\psi),e_\varepsilon^{2p+1})\le \frac{1}{2(p+1)} \varepsilon^{2p+1}\norm{\beta(u-\psi)}_{L^{2(p+1)}(I)}^{2(p+1)},
	\]
	and hence
	\begin{align*}
		\norm{e_\varepsilon}_{L^{2(p+1)}(\Omega)}&\le \left(C\frac{p+1}{2p+1}\right)^{\frac{1}{2(p+1)}}\varepsilon^{\frac{2p+1}{2(p+1)}}\norm{\beta(u-\psi)}_{L^{2(p+1)}(I)}\\
		&\le C^{\frac{1}{2(p+1)}}\varepsilon^{\frac{2p+1}{2(p+1)}}\norm{\beta(u-\psi)}_{L^{2(p+1)}(I)},
	\end{align*}
	where the constant $C$ is still independent of $p$. If we let $p$ tend to infinity, the desired result follows from Lemma \ref{lem:R:multiplierboundedness}.
\end{proof}

\begin{remark} \label{rm:regError}
    We later consider the error $\|u-\ue\|_{L^2(\Omega)}$. Nevertheless,
	Theorem \ref{thm:R:LinfError} gives an upper bound for the error due to the H\"older inequality. Even, in Section~\ref{sec:N:sharpness}, this rate is numerically  validated to be sharp.
\end{remark}

We close this section with a local regularity result for the solution of the Poisson equation, 
which is needed later in the proof of Lemma~\ref{lem:E:I:GmGh_L1_dual}.
\begin{lemma}
	\label{lem:L:local_W2p}
	Let $U \subset U_\delta \subset \Omega$ denote two connected subsets 
	with $\operatorname{dist}(\partial U,\partial U_\delta) \geq \delta$, $\delta>0$, with boundaries of class $C^{1,1}$.
	Let $\phi \in L^2(\Omega) \cap L^\infty(U_\delta)$ be given and let $z \in H^1_0(\Omega)$ denote the
	unique solution to
	\begin{align*}
		-\Delta z &= \phi \quad \mbox{ in } \Omega,\\
		z &= 0 \quad \mbox{ on } \partial\Omega.
	\end{align*}
	Then, for $p \in[2,\infty)$ there holds
	\begin{align}
		\|z\|_{W^{2,p}(U)} \leq
		Cp ( \|\phi\|_{L^p(U_\delta)} + \|\phi\|_{L^2(\Omega)} ),
		\label{eq:L:W2p_stability}
	\end{align}
	where the constant $C$ depends on $\delta$ but not on $p$.
\end{lemma}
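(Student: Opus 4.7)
The plan is a standard cutoff/Calderón--Zygmund argument combined with the global $H^2$-regularity of $z$ on the convex domain $\Omega$. I would first choose an intermediate open set $U'$ with $C^{1,1}$-boundary, $U \subset U' \subset U_\delta$, with each pair of boundaries at distance at least $\delta/2$, and a cutoff $\eta \in C_0^\infty(U')$ with $\eta \equiv 1$ on $U$ and $\|D^k \eta\|_{L^\infty} \le C(\delta)$ for $k = 0, 1, 2$. Setting $w = \eta z$ and extending by zero to $\R^N$, the function $w$ has compact support in $U'$ and satisfies
\begin{align*}
-\Delta w = \eta \phi - 2\nabla \eta \cdot \nabla z - z \Delta \eta \qquad \text{in } \R^N.
\end{align*}
Since $\eta \equiv 1$ on $U$, any bound on $\|w\|_{W^{2,p}(\R^N)}$ transfers to $\|z\|_{W^{2,p}(U)}$.

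The analytic input is the Newton-potential representation $w = N \ast (-\Delta w)$ on $\R^N$ together with the Calderón--Zygmund estimate
\begin{align*}
\|\nabla^2 (N \ast f)\|_{L^p(\R^N)} \le \tilde C\, p\, \|f\|_{L^p(\R^N)}, \qquad 2 \le p < \infty,
\end{align*}
with $\tilde C$ independent of $p$; the linear growth in $p$ follows from Marcinkiewicz interpolation between the weak-$(1,1)$ endpoint and the $L^2$-bound for the kernel $\partial_i \partial_j N$, together with duality for $p \ge 2$. This yields
\begin{align*}
\|z\|_{W^{2,p}(U)} \le C\, p \bigl(\|\phi\|_{L^p(U')} + \|\nabla z\|_{L^p(U')} + \|z\|_{L^p(U')}\bigr),
\end{align*}
where $C$ depends on $\delta$ but not on $p$.

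It then remains to bound the lower-order terms by $\|\phi\|_{L^2(\Omega)}$, uniformly in $p$. For $\|z\|_{L^p(U')}$ the global $H^2$-regularity on the convex domain, $\|z\|_{H^2(\Omega)} \le C\|\phi\|_{L^2(\Omega)}$, combined with the Sobolev embedding $H^2(\Omega) \hookrightarrow L^\infty(\Omega)$ (valid for $N \le 3$), gives $\|z\|_{L^p(U')} \le |U'|^{1/p} \|z\|_{L^\infty(\Omega)} \le C \|\phi\|_{L^2(\Omega)}$. For $\|\nabla z\|_{L^p(U')}$ the direct embedding $H^2 \hookrightarrow W^{1,6}$ suffices only for $p \le 6$, so for larger $p$ I would perform one preliminary bootstrap: apply the above cutoff estimate on a further auxiliary subdomain $U \subset U'' \subset U'$ at the fixed exponent $p_0 = 6$, and then use $W^{2,6}(U'') \hookrightarrow W^{1,\infty}(U'')$ (again valid for $N \le 3$) to obtain $\|\nabla z\|_{L^\infty(U'')} \le C(\delta)\bigl(\|\phi\|_{L^p(U_\delta)} + \|\phi\|_{L^2(\Omega)}\bigr)$. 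Plugging this into the main estimate, with the cutoff now supported in $U''$, gives the claim.

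The main obstacle, and what dictates the structure of the proof, is to keep the $p$-dependence of all constants linear. Crucially, the preliminary bootstrap must be run at the \emph{fixed} exponent $p_0 > N$ so that no $p$-dependent Sobolev constant is inherited, and the Calderón--Zygmund bound has to be invoked with its sharp $O(p)$ growth, which is classical in harmonic analysis but is not always stated with explicit $p$-tracking in standard PDE references.
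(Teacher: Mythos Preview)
Your proposal is correct and follows essentially the same two-step bootstrap as the paper: first localize with a cutoff and obtain a $W^{1,\infty}$ bound on an intermediate subdomain at the fixed exponent $p_0=6$ via $H^2$-regularity on the convex $\Omega$, then repeat the cutoff argument at general $p$ and absorb the lower-order terms using that $W^{1,\infty}$ bound. The only (inessential) difference is the source of the linear-in-$p$ $W^{2,p}$ estimate: you invoke the whole-space Newton potential and the sharp Calder\'on--Zygmund bound, whereas the paper solves the Dirichlet problem for $v=\omega z$ on the $C^{1,1}$ domain $U_\delta$ and cites \cite[Theorem~9.9]{GilbargTrudinger}, tracking the $p$-dependence of the constant there.
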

\begin{proof}
	We follow a similar proof from 	\cite[Lemma 2.4]{LeykekhmanVexler2016_ThreeDimensionalParabolic_pointwiseControl}, i.e., we apply a boot strapping argument.
	First we introduce an intermediate smooth domain $U_{\delta/2}$ such that $U \subset U_{\delta/2} \subset U_\delta$ with $\mbox{dist}(\partial U, \partial U_{\delta/2}) \geq \delta/2$ and $\mbox{dist}(\partial U_{\delta/2}, \partial U_\delta) \geq \delta/2$. 
	In a first step we show $W^{1,\infty}(U_{\delta/2})$-regularity for $z$. 
	Let $\omega \in C^{\infty}(\Omega)$
	denote a smooth cut-off function on $U_{\delta/2}$, such that
	$\omega|_{U_{\delta/2}} \equiv 1$,
	$\omega|_{\Omega \setminus U_{\delta}} \equiv 0$, and
	$|\omega|_{W^{r,\infty}(\Omega)} \leq C \delta^{-r}$ for $r \in \{0,1,2\}$,
see \cite[Theorem 1.4.1 and Equation (1.42)]{Hoermander2003} for the existence of such a cut-off function and the corresponding estimates.	
We set $v := \omega z$. Then $v\in H^1_0(U_\delta)$ is the weak solution to
\begin{alignat*}{2}
	-\Delta v 
	&= \phi \omega+ (-\Delta \omega) z - 2\nabla \omega \cdot \nabla z
	=: g &\quad& \mbox{in }  U_\delta,\\
	v &= 0  && \mbox{on } \partial U_\delta.
\end{alignat*}
Due to the smoothness properties of $\omega$, the right hand side $g$ can be bounded by 
\begin{align*}
	\|g\|_{L^6(U_\delta)} \leq C \left(
	\|\phi\|_{L^6(U_\delta)} +  \|z\|_{W^{1,6}(U_\delta)} \right),
\end{align*}
where the constant $C$ depends on $\delta$. Moreover, due to the $H^2$-regularity of $z$,  as $\Omega$ is convex, we have
\begin{align*}
	\|z\|_{W^{1,6}(\Omega)}
	\leq C\|z\|_{H^2(\Omega)}
	\leq C \|\phi\|_{L^2(\Omega)}.
\end{align*}
Consequently, by elliptic regularity, c.f.  \cite[Theorem 9.9]{GilbargTrudinger}, we obtain
\begin{align*}
	\|v\|_{W^{2,6}(U_\delta)}
	\leq
	C\|g\|_{L^6(U_\delta)}
	\leq
	C( \|\phi\|_{L^6(U_\delta)} + \|\phi\|_{L^2(\Omega)} ).
\end{align*}
Since $\omega \equiv 1$ on $U_{\delta/2}$ we have $v|_{U_{\delta/2}} \equiv z|_{U_{\delta/2}}$ and therefore
\begin{align} \label{eq:L:interiorW1infty}
\|z\|_{W^{1,\infty}(U_{\delta/2})} \leq C \|z\|_{W^{2,6}(U_{\delta/2})} \leq
	C ( \|\phi\|_{L^6(U_{\delta})} + \|\phi\|_{L^2(\Omega)} ).
\end{align}
Next, we repeat the above argumentation for $U$ and $U_{\delta/2}$ with correspondingly changed cut-off function $\omega$ and auxiliary problem for $v$.
Let $p>6$ and $\omega$ denote a cut-off function such that 
$\omega \equiv 1$ on $U$ and $\omega \equiv 0$ on $\Omega \setminus U_{\delta/2}$.
As above, we obtain 
\begin{align*}
\|g\|_{L^p(U_{\delta/2})} \leq C \left( \|\phi\|_{L^p(U_{\delta/2})} + \|z\|_{W^{1,p}(U_{\delta/2})} \right)\leq C\left( \|\phi\|_{L^p(U_\delta)} + \|\phi\|_{L^2(\Omega)}\right),
\end{align*}
where we used \eqref{eq:L:interiorW1infty}. Finally,
from elliptic regularity, c.f. \cite[Theorem 9.9]{GilbargTrudinger}, we get for $p\in[2,\infty)$ the desired result,
\begin{align*}
	\|z\|_{W^{2,p}(U)} \leq
	Cp( \|\phi\|_{L^p(U_{\delta})} + \|\phi\|_{L^2(\Omega)} ),
\end{align*}
where we notice that the constant $C$ is independent of $p$. This can be seen from the proof of \cite[Theorem 9.9]{GilbargTrudinger}.
\end{proof}

\section{The Discrete Problem}
\label{sec:D}

In the following we derive optimal a-priori error estimates in $L^2(\Omega)$ for a numerical approximation
to \eqref{eq:C:VI} which is based on the regularized problem \eqref{eq:R:PDE}. We rely on the approach of
\cite{Nochetto_1988_Sharp_Linfty_semilinear_elliptic_Free_boundaries}. However, we again notice that the results from that reference are not directly applicable in our setting as in \cite{Nochetto_1988_Sharp_Linfty_semilinear_elliptic_Free_boundaries} global
$W^{2,p}$-regularity is required with arbitrarily large $p < \infty$.

Let us now introduce the numerical approximation which we are dealing with. We do not discretize \eqref{eq:R:PDE} directly but an equivalent reformulation of it. According to \eqref{eq:boundbetaeps}, we may truncate the nonlinearity $\beta_\varepsilon$ without changing the solution to \eqref{eq:R:PDE}. More precisely, if we choose the constant 
\begin{equation}\label{eq:lambda}
	\lambda := c\lVert f + \Delta\psi\rVert_{L^\infty(\Omega)}
\end{equation}
with $c\ge 1$, we may redefine $\beta_\varepsilon$ by the bounded, monotonically increasing, and globally Lipschitz continuous function
\begin{align}
	\label{eq:R:betaEps}
	\beta_\varepsilon(s) :=
	\begin{cases}
		0, & \mbox{if } s\geq 0,\\
		\max(s/\varepsilon,-\lambda), & \mbox{if } s<0,
	\end{cases}
\end{align}
without changing the solution of \eqref{eq:R:PDE}. This problem with the redefined nonlinearity is now being discretized by piecewise continuous and linear finite elements. Let $\{\mathcal T_h\}$ be a family of conforming and quasi-uniform triangulations of $\Omega$ which are admissible in the sense of Ciarlet. We denote by $h:=\max_{T\in \mathcal{T}_h} \operatorname{diam}T$ the global mesh parameter and assume that $h<1/2$. For each element $T\in\mathcal{T}_h$ we assume that it is isoparametrically equivalent either to the unit cube or to the unit simplex in $\R^N$.
On $\mathcal T_h$ we define
\begin{align*}
  V_h := \{ v_h \in C(\overline \Omega) \,|\,
  v|_{T} \mbox{ is affine } \forall T \in \mathcal T_h, \, v|_{\partial\Omega} \equiv 0\},
\end{align*}
and determine approximations to the solution $u_\varepsilon$ of \eqref{eq:R:PDE} by solving the problem: Find $u_{\varepsilon,h}\in V_h$ such that
\begin{align}
  (\nabla \uh,\nabla v_h) + (\beta_\varepsilon(\uh-\psi),v_h) = (f,v_h) \quad \forall v_h \in V_h.
  \label{eq:D:PDE}
\end{align}
For each mesh parameter $h$ the existence of a unique solution to this finite dimensional problem follows by standard arguments. 
For later reference, we define $I_h:C(\overline\Omega) \to V_h$ as the usual Lagrangian interpolation operator, and the Ritz projection of $w \in H^1_0(\Omega)$ as the function $R_h w$ in $V_h$ which satisfies
\begin{equation}\label{eq:Ritz}
  (\nabla (R_h w-w),\nabla v_h)=0 \quad \forall v_h \in V_h.
\end{equation} 
Finally, let us stress that we assume exact integration for the non-smooth nonlinearity $\beta_\varepsilon(\uh-\psi)$. 
We refer to \cite{1994_Nochetto_numericalIntegration_Obstacle},
where a lumping technique is used for the numerical approximation of the non-linear term. 

\section{Error Estimates in \texorpdfstring{$L^2(\Omega)$}{Lg}}
\label{sec:E}
In Theorem \ref{thm:R:LinfError} we have already seen that the regularization error can appropriately be bounded in $L^2(\Omega)$, even  in $L^\infty(\Omega)$. 
In the following we
derive a priori bounds with respect to the discretization parameter $h$ for the discretization error $\ue-\uh$ in $L^2(\Omega)$, see Theorem \ref{thm:E:I:ue_m_uh_dual}. 
Afterwards, we combine these results in Theorem \ref{thm:E:totalerror}.

Before going into detail, let us quickly elucidate the structure of the main part of this section, the proof of estimates for the discretization error.
Based on the assumption that $\psi<0$ on the boundary, i.e., the obstacle is inactive on the boundary, we show in a first step that there exists a (non-empty) strip $D_d$ at the boundary $\partial\Omega$ of width $d$ (independent of $\varepsilon$ and $h$) such that
\begin{equation}\label{eq:betaneighborhood}	\beta(u-\psi)=\beta_\varepsilon(u_\varepsilon-\psi)=\beta_\varepsilon(u_{\varepsilon,h}-\psi)=0\quad \text{a.e. in }D_d\subset\Omega,
\end{equation}
see Lemma \ref{lem:E:P:InactiveAtBoundary}, and hence, the constraint is inactive in the neighborhood $D_d$ of the boundary for each problem. 
The proof requires that $\varepsilon$ and $h$ are small enough as it relies on the fact that we already have 
point-wise convergence of $u_\varepsilon$ towards $u$, see Theorem \ref{thm:R:LinfError}, 
and point-wise convergence of $u_{\varepsilon,h}$ towards $u_\varepsilon$ with some (maybe not optimal) rate, see Lemma~\ref{lem:E:P:subOpt_Linf}.
According to \eqref{eq:betaneighborhood}, we also have that $R_hu_\varepsilon-u_{\varepsilon,h}$ is discretely harmonic, see \eqref{eq:L:discHarmon_1}, on $D_d$. 
This implies that there exists another strip
$D$ at the boundary (for instance of width $d/2$) such that
\begin{equation}\label{eq:locdiscreteharmonic}
	\norm{R_hu_\varepsilon-u_{\varepsilon,h}}_{H^1(D)}\le C \norm{R_hu_\varepsilon-u_{\varepsilon,h}}_{L^2(D_d\backslash D)}\le C \norm{R_hu_\varepsilon-u_{\varepsilon,h}}_{L^2(\Omega\backslash D)},
\end{equation}
where the constant $C$ depends on the distance between $D$ and $D_d$, see Theorem \ref{thm:E:D:L2D_to_L2Dd}. 
Based on this, we get after having introduced $R_hu_\varepsilon$ as an intermediate function
\begin{align}
	\norm{u_\varepsilon-u_{\varepsilon,h}}_{L^2(\Omega)}
	&\le\norm{u_\varepsilon-R_hu_\varepsilon}_{L^2(D)}+\norm{R_hu_\varepsilon-u_{\varepsilon,h}}_{L^2(D)}+\norm{u_\varepsilon-u_{\varepsilon,h}}_{L^2(\Omega\backslash D)}\notag\\
	&\le\norm{u_\varepsilon-R_hu_\varepsilon}_{L^2(D)}+C\norm{R_hu_\varepsilon-u_{\varepsilon,h}}_{L^2(\Omega\backslash D)}+\norm{u_\varepsilon-u_{\varepsilon,h}}_{L^2(\Omega\backslash D)}\notag\\
	&\le C\left(\norm{u_\varepsilon-R_hu_\varepsilon}_{L^2(\Omega)}+\norm{u_\varepsilon-u_{\varepsilon,h}}_{L^2(\Omega\backslash D)}\right),\label{eq:assembling}
\end{align}
where we introduced $u_\varepsilon$ as an intermediate function in the last step. 
Estimating the error of the Ritz-projection $R_h u_\varepsilon$ is standard, taking into account 
the $H^2(\Omega)$-regularity of $u_\varepsilon$ according to Lemma \ref{lem:R:multiplierboundedness}. 
It remains to bound the second term in the previous inequality. More precisely, we estimate the difference $u_\varepsilon-u_{\varepsilon,h}$ in $L^\infty(\Omega\backslash D)$. 
Here we rely on a duality argument as in \cite{Nochetto_1988_Sharp_Linfty_semilinear_elliptic_Free_boundaries}, see Theorem~\ref{thm:E:I:ue_m_uh_dual}. 
However, we always take care on the fact that this term only lives in the interior of the domain, 
where we have higher regularity. 
This is the main reason for having second order convergence (times a $\log$-factor) in $L^2(\Omega)$ in case of general convex polygonal/polyhedral domains.

We start with providing an $L^\infty(\Omega)$-estimate for the discretization error, which is valid in convex domains, but only has a lower convergence rate.
\begin{lemma}
\label{lem:E:P:subOpt_Linf} Let $u_\varepsilon$ and $u_{\varepsilon,h}$ be the solutions of \eqref{eq:R:PDE} and \eqref{eq:D:PDE}, respectively. Then, there is the estimate
\[\| \ue - \uh \|_{L^\infty(\Omega)} \leq C h^{2-\frac{N}{2}}( \|f \|_{L^\infty(\Omega)} + \|\Delta\psi \|_{L^\infty(\Omega)} ),\]
where the constant $C$ is independent of
$\varepsilon$ and $h$.
\end{lemma}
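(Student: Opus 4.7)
The plan is to decompose the error via the Ritz projection, obtain an energy bound for the discrete residual from the monotonicity and $L^\infty$-boundedness of $\beta_\varepsilon$, and then pass from $H^1$ to $L^\infty$ by combining a standard $L^\infty$-estimate for the Ritz projection with a discrete Sobolev-type inverse inequality.

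First, I would derive the Galerkin error equation. Since $V_h\subset H^1_0(\Omega)$, subtracting \eqref{eq:D:PDE} from \eqref{eq:R:PDE} tested against $v_h\in V_h$ yields
\[
(\nabla(\ue-\uh),\nabla v_h) = (\beta_\varepsilon(\uh-\psi)-\beta_\varepsilon(\ue-\psi),v_h)\quad \forall v_h\in V_h,
\]
so that by the defining property \eqref{eq:Ritz} of the Ritz projection the discrete residual $\varphi_h := R_h \ue - \uh\in V_h$ satisfies the same right-hand side. I would then test with $v_h=\varphi_h$ and split $\varphi_h=(R_h \ue - \ue)+(\ue - \uh)$ inside the nonlinear term: the monotonicity of $\beta_\varepsilon$ makes the contribution containing $\ue - \uh$ non-positive, while the remaining one is controlled by combining the uniform bound $|\beta_\varepsilon(\cdot)|\leq\lambda$ from \eqref{eq:boundbetaeps} with the standard Ritz-projection estimate $\|R_h \ue - \ue\|_{L^1(\Omega)}\leq Ch^2\|\ue\|_{H^2(\Omega)}$. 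Together with the $\varepsilon$-uniform $H^2$-bound for $\ue$ from Lemma~\ref{lem:R:boundedregularized}, this yields the energy estimate
\[
\|\nabla\varphi_h\|_{L^2(\Omega)} \leq Ch(\|f\|_{L^\infty(\Omega)}+\|\Delta\psi\|_{L^\infty(\Omega)}).
\]

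Second, via the triangle inequality
\[
\|\ue - \uh\|_{L^\infty(\Omega)}\leq \|\ue - R_h \ue\|_{L^\infty(\Omega)} + \|\varphi_h\|_{L^\infty(\Omega)},
\]
I would bound the first term by the standard $L^\infty$-error estimate for the Ritz projection of $H^2$-data in convex polygonal/polyhedral domains, namely $\|\ue-R_h \ue\|_{L^\infty(\Omega)}\leq Ch^{2-N/2}\|\ue\|_{H^2(\Omega)}$, obtained via the Sobolev embedding $H^2\hookrightarrow C(\bar\Omega)$ valid for $N\leq 3$ together with standard best-approximation/inverse arguments. For $\varphi_h\in V_h$, I would use a discrete Sobolev-type estimate $\|v_h\|_{L^\infty(\Omega)}\leq Ch^{1-N/2}\|\nabla v_h\|_{L^2(\Omega)}$, which for $N=3$ follows at once from $H^1(\Omega)\hookrightarrow L^6(\Omega)$ and the local inverse inequality $\|v_h\|_{L^\infty(T)}\leq C h_T^{-1/2}\|v_h\|_{L^6(T)}$, and for $N=2$ from an analogous optimization over the family of embeddings $H^1(\Omega)\hookrightarrow L^p(\Omega)$, $p<\infty$. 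Combined with the energy estimate for $\varphi_h$, this produces $\|\varphi_h\|_{L^\infty(\Omega)}\leq Ch^{2-N/2}(\|f\|_{L^\infty}+\|\Delta\psi\|_{L^\infty})$, completing the proof.

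The main obstacle is the second step: obtaining the sharp rate $h^{2-N/2}$ when passing from $H^1$ to $L^\infty$ on $\varphi_h$. A direct use of the elementwise inverse inequality $\|v_h\|_{L^\infty}\leq Ch^{-N/2}\|v_h\|_{L^2}$ together with Poincar\'e would only give the strictly weaker rate $h^{1-N/2}$, so the refined Sobolev-type inverse estimate is essential. In $N=3$ this route is sharp thanks to the limiting embedding $H^1\hookrightarrow L^6$; in $N=2$ it is borderline, with an additional factor of order $|\log h|^{1/2}$ arising from the need to choose $p\sim|\log h|$, which is mild enough not to affect the sub-optimal nature of the stated estimate.
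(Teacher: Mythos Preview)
Your argument is essentially correct but follows a genuinely different route from the paper. The paper proceeds by an $L^\infty$--$L^1$ duality argument \`a la Nochetto: one represents $\|\ue-\uh\|_{L^\infty(\Omega)}$ via a dual problem $-\Delta G + bG = \tilde\delta$ with $\|\tilde\delta\|_{L^1}\le 1$, reduces to bounding $\|G-R_hG\|_{L^1(\Omega)}$, and then introduces a second dual problem $-\Delta z = \sgn(G-R_hG)$ so that $\|G-R_hG\|_{L^1}\le 2\|z-R_hz\|_{L^\infty(\Omega)}$; using only $H^2(\Omega)$-regularity of $z$ one gets the clean rate $h^{2-N/2}$. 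Your approach, by contrast, is primal: an energy estimate for $\varphi_h=R_h\ue-\uh$ obtained from monotonicity and the truncation bound $|\beta_\varepsilon|\le\lambda$ (this bound comes from the redefinition \eqref{eq:R:betaEps}, not from \eqref{eq:boundbetaeps}, which only covers the continuous solution), followed by a discrete Sobolev/inverse passage to $L^\infty$.

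The trade-off is exactly the one you identify. Your method is more elementary and avoids the double-duality machinery, but in $N=2$ the step $\|\varphi_h\|_{L^\infty}\le C h^{1-N/2}\|\nabla\varphi_h\|_{L^2}$ is borderline and produces an extra $|\log h|^{1/2}$ factor, so you end up with $Ch\,|\log h|^{1/2}$ rather than the stated $Ch$. The paper's duality route avoids this because the $L^\infty$-error of the Ritz projection of an $H^2$-function does satisfy the clean bound $Ch^{2-N/2}\|z\|_{H^2}$ without logarithms (via the elementwise embedding $H^2(\hat T)\hookrightarrow C(\hat T)$). For the purpose the lemma serves in the paper---establishing that $\uh-\psi>0$ near $\partial\Omega$ for small $h$---your slightly weaker bound is entirely sufficient, but strictly speaking it does not prove the lemma as stated in the two-dimensional case.
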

\begin{proof}
This follows from 
\cite[Lemma 2.2 and Theorem 2.3]{Nochetto_1988_Sharp_Linfty_semilinear_elliptic_Free_boundaries} 
using only $H^2(\Omega)$-regularity which holds in general convex domains. Equivalently, one can set $D=\emptyset$ within the proof of Theorem \ref{thm:E:I:ue_m_uh_dual} and Lemma \ref{lem:E:I:GmGh_L1_dual}. Then, by taking into account only the $H^2(\Omega)$-regularity of $z$ within the proof of Lemma \ref{lem:E:I:GmGh_L1_dual} for estimating $\|z-R_hz\|_{L^\infty(\Omega)}$, one obtains the desired result as well.
\end{proof}
Based on the previous lemma, we next show that \eqref{eq:betaneighborhood} holds.
\begin{lemma}
\label{lem:E:P:InactiveAtBoundary}
Let $u$, $u_\varepsilon$ and $u_{\varepsilon,h}$ be the solutions of \eqref{eq:C:slackform}, \eqref{eq:R:PDE} and \eqref{eq:D:PDE}, respectively. In addition assume that $\psi<0$ on the boundary.
Then, there exist constants $d>0$, $\varepsilon_0>0$ and $h_0>0$ such that for all $\varepsilon\leq \varepsilon_0$ and $h\le h_0$ there holds
 \[
 	\beta(u-\psi)=\beta_\varepsilon(u_\varepsilon-\psi)=\beta_\varepsilon(u_{\varepsilon,h}-\psi)=0\quad \text{a.e. in }D_d:=\{ x \in \Omega \,|\, \operatorname{dist}(x,\partial\Omega) \le d \}.
 \]
\end{lemma}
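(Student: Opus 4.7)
The plan rests on pointwise proximity: all three functions are $L^\infty$-close, so a strip where $\psi$ is strictly negative and $u$ is small forces the obstacle to be inactive for each. Concretely, I would first pick the strip using only $u$ and $\psi$ (which depend neither on $\varepsilon$ nor on $h$), and then use the two convergence results \eqref{thm:R:LinfError} and Lemma~\ref{lem:E:P:subOpt_Linf} to transfer the inactivity to $u_\varepsilon$ and to $u_{\varepsilon,h}$ respectively.

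\textbf{Step 1: a strip where $u - \psi$ is bounded below.} Because $\psi \in W^{2,\infty}(\Omega) \hookrightarrow C(\bar\Omega)$ and $\psi < 0$ on $\partial \Omega$, by compactness and continuity there exist $\delta > 0$ and $d_0 > 0$ such that $\psi(x) \le -2\delta$ whenever $\operatorname{dist}(x,\partial\Omega) \le d_0$. Lemma~\ref{lem:R:multiplierboundedness} gives $u \in H^2(\Omega) \hookrightarrow C(\bar\Omega)$ with $u = 0$ on $\partial\Omega$, so by uniform continuity of $u$ there is a $d \in (0,d_0]$ with $|u(x)| \le \delta/2$ on $D_d$. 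Combining, $u - \psi \ge 3\delta/2 > 0$ a.e.\ in $D_d$. By Remark~\ref{rem:character} this yields $\beta(u-\psi) = 0$ a.e.\ in $D_d$. Note that $d$ depends only on $u$ and $\psi$, in particular not on $\varepsilon$ or $h$.

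\textbf{Step 2: transfer to $u_\varepsilon$.} By Theorem~\ref{thm:R:LinfError} we have
\[
\|u - u_\varepsilon\|_{L^\infty(\Omega)} \le \varepsilon \|f + \Delta \psi\|_{L^\infty(\Omega)}.
\]
Choose $\varepsilon_0 > 0$ so that $\varepsilon_0 \|f + \Delta \psi\|_{L^\infty(\Omega)} \le \delta/2$. Then for all $\varepsilon \le \varepsilon_0$ we get $u_\varepsilon - \psi \ge u - \psi - |u-u_\varepsilon| \ge 3\delta/2 - \delta/2 = \delta > 0$ in $D_d$, so by the definition \eqref{eq:R:betaEps} of $\beta_\varepsilon$, $\beta_\varepsilon(u_\varepsilon - \psi) = 0$ a.e.\ in $D_d$.

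\textbf{Step 3: transfer to $u_{\varepsilon,h}$.} By Lemma~\ref{lem:E:P:subOpt_Linf},
\[
\|u_\varepsilon - u_{\varepsilon,h}\|_{L^\infty(\Omega)} \le C h^{2-N/2}(\|f\|_{L^\infty(\Omega)} + \|\Delta\psi\|_{L^\infty(\Omega)}),
\]
with $C$ independent of both $\varepsilon$ and $h$. Since $2 - N/2 > 0$ for $N \in \{2,3\}$, I can fix $h_0 > 0$ (independent of $\varepsilon$) so that the right-hand side is bounded by $\delta/2$ for all $h \le h_0$. Combining with Step~2, $u_{\varepsilon,h} - \psi \ge u_\varepsilon - \psi - |u_\varepsilon - u_{\varepsilon,h}| \ge \delta/2 > 0$ in $D_d$, which again gives $\beta_\varepsilon(u_{\varepsilon,h} - \psi) = 0$ a.e.\ in $D_d$.

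The only genuine ingredient beyond continuity arguments is the availability of quantitative $L^\infty$-convergence at \emph{some} positive rate for both the regularization error (Theorem~\ref{thm:R:LinfError}) and the discretization error (Lemma~\ref{lem:E:P:subOpt_Linf}); once these are in hand, the choices of $\varepsilon_0$ and $h_0$ are made in this order and the strip width $d$ is kept fixed from Step~1. There is no circularity, since Lemma~\ref{lem:E:P:subOpt_Linf} does not itself depend on the inactivity claim we are proving here.
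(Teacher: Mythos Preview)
Your proof is correct and follows essentially the same approach as the paper: first fix a strip using continuity of $u-\psi$ (or, as you do, of $u$ and $\psi$ separately), then push the positivity from $u-\psi$ to $u_\varepsilon-\psi$ via Theorem~\ref{thm:R:LinfError} and from there to $u_{\varepsilon,h}-\psi$ via Lemma~\ref{lem:E:P:subOpt_Linf}. The only cosmetic difference is that the paper works directly with the continuous function $u-\psi$ on $\bar\Omega$ to obtain the lower bound on the strip, whereas you bound $\psi$ and $u$ separately before combining; both lead to the same conclusion.
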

\begin{proof}
As the obstacle $\psi$ is a continuous function on the boundary, which represents a compact set, we obtain that there exists a $\tau>0$ such that $\psi\le -\tau$ on the boundary, and hence, there holds
$u - \psi \geq \tau$ on the boundary. Next, we notice that $u-\psi$ is a continuous function up to the boundary, see Lemma~\ref{lem:R:multiplierboundedness}. Consequently, there is a constant $d>0$ such that 
$u - \psi\geq \frac{1}{2}\tau$ on $D_d$. Further, from Theorem~\ref{thm:R:LinfError} we have that $|u(x)-\ue(x)| \leq \varepsilon\|f+\Delta \psi\|_{L^\infty}$ for all $x\in\Omega$. Consequently, there exists a constant $\varepsilon_0>0$ such that $\ue - \psi \geq \frac{1}{4}\tau$ on $D_d$ for all $\varepsilon\le \varepsilon_0$. In the same manner, now using the $L^\infty(\Omega)$-estimate from Lemma~\ref{lem:E:P:subOpt_Linf} (note that the constant there is independent of $\varepsilon$ and $h$), we deduce the existence of a constant $h_0>0$ such that for all $h\le h_0$ there holds $\uh - \psi \geq \frac{1}{8}\tau$ on $D_d$. The assertion now follows from the discussion in Remark \ref{rem:character} and the definition of $\beta_\varepsilon$ in~\eqref{eq:R:betaEps}.
\end{proof}

Next, we are concerned with proving \eqref{eq:locdiscreteharmonic}. For that reason, let us first introduce the notion of locally discrete harmonic functions as it is used in the following. Let $U_\delta$ denote a subset of $\Omega$. We call a function $w_h \in V_h$ discretely harmonic on $U_\delta$ if
\begin{align}
\label{eq:L:discHarmon_1}
(\nabla w_h,\nabla v_h) = 0 \quad \forall v_h \in V_h\cap \{v\in H^1(\Omega)\,|\, v=0\text{ a.e. in } \Omega\backslash U_\delta\}.
\end{align}
It is well known that discretely harmonic functions fulfill the following Caccioppoli-type estimate: Let $U$ and $U_\delta$ be subsets of $\Omega$ such that $U\subset U_\delta$ and $\operatorname{dist}(U,\partial U_\delta\backslash \partial\Omega)=\delta$ with $\delta>0$. Further, assume that $w_h\in V_h$ is discretely harmonic on $U_\delta$. Then for $h$ small enough (depending on $\delta$) there is the estimate
\begin{equation}\label{eq:discreteharm}
	\norm{\nabla w_h}_{L^2(U)}\le C \delta^{-1} \norm{w_h}_{L^2(U_\delta)},
\end{equation}
where the constant $C$ is independent of $\delta$. Estimates of this kind are essential when proving local energy norm estimates, which can be traced back to \cite{NitscheSchatz1974}
We also mention \cite{Demlow2011} where in contrast to \cite{NitscheSchatz1974} the assumption on quasi-uniform meshes is avoided and sharply varying grids are admitted. A more sophisticated discussion on local estimates and a survey on related results from the literature can be found in \cite{Demlow2011} as well.

In \eqref{eq:discreteharm} the norm on the right hand side is defined on $U_\delta$ but not on $U_\delta\backslash U$ as it is required for our purposes. 
However, the results from the literature can be extended to this by minor modifications. We summarize this in the following lemma.
We assume that the mesh is quasi-uniform, and only notice that the results also extend to the more general setting of \cite{Demlow2011}.

\begin{lemma}
\label{lem:L:H1D_L2Dd}
Let $U$ and $U_\delta$ be subsets of $\Omega$ such that $U\subset U_\delta$ and $\operatorname{dist}(U,\partial U_\delta\backslash \partial\Omega)=\delta$ with $\delta>0$. Further, assume that $w_h\in V_h$ is discretely harmonic on $U_\delta$ in the sense of \eqref{eq:L:discHarmon_1}. Then, there exists a constant $h_\delta>0$ (depending on $\delta$) such that for $h\leq h_\delta$ there is the estimate
\[
\norm{\nabla w_h}_{L^2(U)}\le C \delta^{-1} \norm{w_h}_{L^2(U_\delta\backslash U)},
\]
where the constant $C$ is independent of $\delta$.
\end{lemma}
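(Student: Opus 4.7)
The key point is that the standard Caccioppoli estimate for discretely harmonic functions, recalled in \eqref{eq:discreteharm}, already yields a right-hand side that only involves $w_h$ on the annular region where the cut-off varies. The task is to redo the standard proof while being careful that (i) the gradient of the cut-off can be arranged to be supported strictly inside $U_\delta\setminus U$, and (ii) the super-approximation estimate for the nodal interpolant produces an error term with the cut-off $\omega$ attached to $\nabla w_h$ in the correct way so that the resulting gradient contribution can be absorbed into the left-hand side.

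First, fix a smooth cut-off $\omega\in C^\infty_0(\Omega)$ with $\omega\equiv 1$ on $U$, $\omega\equiv 0$ outside $U_\delta$, $0\le\omega\le 1$, and $|\omega|_{W^{k,\infty}(\Omega)}\le C\delta^{-k}$ for $k\in\{1,2\}$; such $\omega$ exists as in the proof of Lemma~\ref{lem:L:local_W2p}. By construction $\nabla\omega$ is supported in $U_\delta\setminus U$. Set $v_h:=I_h(\omega^2 w_h)\in V_h$. For $h$ sufficiently small relative to $\delta$ the support of $v_h$ lies in $U_\delta$, so $v_h$ is admissible in \eqref{eq:L:discHarmon_1} and discrete harmonicity yields $(\nabla w_h,\nabla v_h)=0$.

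Writing $(\nabla w_h,\nabla(\omega^2 w_h))=(\nabla w_h,\nabla(\omega^2 w_h-v_h))$, the product rule gives
\[
\int_\Omega\omega^2|\nabla w_h|^2 = -2\int_\Omega \omega\, w_h\,\nabla\omega\cdot\nabla w_h + (\nabla w_h,\nabla(\omega^2 w_h-v_h)).
\]
The first term is treated by Young's inequality, producing $\tfrac12\int\omega^2|\nabla w_h|^2$ and a term of the form $C\int|\nabla\omega|^2 w_h^2\le C\delta^{-2}\|w_h\|_{L^2(U_\delta\setminus U)}^2$ because $\nabla\omega$ is supported in $U_\delta\setminus U$. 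For the interpolation error term I would use that $\omega^2 w_h-I_h(\omega^2 w_h)$ vanishes on every element $T$ on which $\omega$ is constant (either $0$ or $1$); hence its support is contained in the union of elements intersecting $\operatorname{supp}\nabla\omega$, which, for $h\le h_\delta$, lies inside $U_\delta\setminus U$. On each such $T$, the Schatz--Wahlbin super-approximation estimate (using $w_h$ affine on $T$ so that $D^2(\omega^2 w_h)=D^2(\omega^2)w_h+2D(\omega^2)\otimes\nabla w_h$) gives
\[
\|\nabla(\omega^2 w_h - I_h(\omega^2 w_h))\|_{L^2(T)}\le Ch\delta^{-2}\|w_h\|_{L^2(T)} + Ch\delta^{-1}\|\omega\nabla w_h\|_{L^2(T)},
\]
where the $\omega$ in the second term is the decisive gain.

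Applying Cauchy--Schwarz element-wise, followed by Young on each of the two resulting products (with the gradient factor from super-approximation), the first product is controlled by $C\delta^{-2}\|w_h\|_{L^2(T)}^2$ after one application of the inverse inequality $\|\nabla w_h\|_{L^2(T)}\le Ch^{-1}\|w_h\|_{L^2(T)}$, while the second product is split into $\tfrac14\|\omega\nabla w_h\|_{L^2(T)}^2 + Ch^2\delta^{-2}\|\nabla w_h\|_{L^2(T)}^2$, and another inverse estimate converts the latter into $C\delta^{-2}\|w_h\|_{L^2(T)}^2$ (using $h\le\delta$). Summing over the elements supporting the error (all contained in $U_\delta\setminus U$) and combining with the Young bound for the $2\int\omega w_h\nabla\omega\cdot\nabla w_h$ term leads to
\[
\int_\Omega\omega^2|\nabla w_h|^2\le \tfrac34\int_\Omega\omega^2|\nabla w_h|^2 + C\delta^{-2}\|w_h\|_{L^2(U_\delta\setminus U)}^2.
\]
Absorbing and using $\omega\equiv 1$ on $U$ gives $\|\nabla w_h\|_{L^2(U)}^2\le C\delta^{-2}\|w_h\|_{L^2(U_\delta\setminus U)}^2$, as claimed.

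\textbf{Main obstacle.} The subtle point is to apply the super-approximation estimate in exactly the form with $\omega\nabla w_h$ in the error (rather than $\nabla w_h$), since this is what permits absorption into the left-hand side $\int\omega^2|\nabla w_h|^2$; a crude interpolation bound $\|\nabla(\omega^2w_h-I_h(\omega^2w_h))\|_{L^2(T)}\le Ch\|D^2(\omega^2w_h)\|_{L^2(T)}$ combined with brute-force inverse estimates produces an unwanted factor of $h^{-1}$ and fails. Additionally, care is required to choose $h_\delta$ small enough that both the support of $v_h$ stays in $U_\delta$ (for admissibility) and the support of the interpolation error stays in $U_\delta\setminus U$ (so the right-hand side involves $w_h$ only on the annulus); both conditions amount to $h$ being a small multiple of $\delta$.
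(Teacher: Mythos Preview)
Your approach is essentially the paper's: introduce a cut-off $\omega$, expand $\int\omega^2|\nabla w_h|^2$ via the product rule, use discrete harmonicity to replace $\omega^2 w_h$ by the interpolation defect $\omega^2 w_h-I_h(\omega^2 w_h)$, and invoke super-approximation in the form that keeps $\omega\nabla w_h$ (not merely $\nabla w_h$) so that the gradient term can be absorbed. Your identification of this last point as the crux is exactly right, and your treatment of the two products via the inverse inequality and Young's inequality is equivalent to the paper's slightly more direct route (the paper applies the inverse inequality once to the full factor $\|\nabla w_h\|_{L^2(T)}$ before expanding $|\omega^2 w_h|_{H^2(T)}$, avoiding your second Young step).

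There is, however, a small technical gap in your cut-off construction. You take $\omega\equiv 1$ on $U$ and $\omega\equiv 0$ outside $U_\delta$, so $\operatorname{supp}\nabla\omega\subset\overline{U_\delta\setminus U}$, and then assert that the union of elements intersecting $\operatorname{supp}\nabla\omega$ lies inside $U_\delta\setminus U$ for $h\le h_\delta$. This fails: since $\operatorname{supp}\nabla\omega$ may reach $\partial U$, any element straddling $\partial U$ meets both $\operatorname{supp}\nabla\omega$ and $U$, no matter how small $h$ is. The analogous issue at $\partial U_\delta$ threatens the admissibility of $v_h=I_h(\omega^2 w_h)$ in \eqref{eq:L:discHarmon_1}. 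The paper resolves both by inserting buffers: it takes $\omega\equiv 1$ on $U_{2\delta/5}$ and $\omega\equiv 0$ on $\Omega\setminus U_{3\delta/5}$, so that for $h$ small the elements carrying the interpolation defect lie in $U_{4\delta/5}\setminus U_{\delta/5}\subset U_\delta\setminus U$ and $I_h(\omega^2 w_h)$ vanishes outside $U_{4\delta/5}\subset U_\delta$. With this adjustment your argument goes through and coincides with the paper's.
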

\begin{proof}
For $i=1,\ldots ,4$, let $U_{i\delta/5}$ be a subset of $\Omega$ such that $U\subset U_{i\delta/5}\subset U_\delta$ and $\operatorname{dist}(U,\partial U_{i\delta/5}\backslash \partial\Omega)=i\delta/5$. Moreover,
we define the smooth cut-off function $\omega \in C^\infty(\Omega)$ which satisfies
\begin{align*}
  \omega|_{U_{2\delta/5}} \equiv 1, 
  \quad
  \omega|_{\Omega \setminus U_{3\delta/5}} \equiv 0,
  \quad 
  \mbox{and}\quad 
  |\omega|_{W^{r,\infty}(\Omega)} \leq C\delta^{-r} \text{ for } 0\le r \le 2,
\end{align*}
see \cite[Theorem 1.4.1 and Equation (1.42)]{Hoermander2003} for the existence of such a cut-off function and the corresponding estimates. By simple calculations we deduce
\begin{align}
  \|\nabla w_h \|_{L^2(U)}^2 &\le\|\omega\nabla w_h\|^2_{L^2(U_\delta)} = \int_{U_\delta} \omega^2 \nabla w_h \cdot \nabla w_h \notag\\
  &= \int_{U_{\delta}}\nabla w_h\cdot\nabla (\omega^2 w_h)
  - \int_{U_{\delta}}w_h\nabla w_h\cdot\nabla \omega^2.\label{eq:startlocal}
\end{align}
For the second term we obtain by the Cauchy-Schwarz inequality and the properties of~$\omega$
\begin{align}
  \bigg|\int_{U_\delta}&w_h\nabla w_h\cdot\nabla \omega^2\bigg|
  =2\left|\int_{U_\delta}\omega\nabla w_h\cdot w_h\nabla \omega\right|
  \le 2\|\omega\nabla w_h\|_{L^2(U_\delta)} \| w_h\nabla \omega\|_{L^2(U_\delta\setminus U)}\notag\\
  &\le C\delta^{-1}\|\omega\nabla w_h\|_{L^2(U_\delta)} \| w_h\|_{L^2(U_\delta\setminus U)}
  \le \frac{1}{4}\|\omega\nabla w_h\|_{L^2(U_\delta)}^2
  + C \delta^{-2}\| w_h\|^2_{L^2(U_\delta\setminus U)},\label{eq:local1}
\end{align}
where we applied Young's inequality in the last step. Next, we consider the first term in \eqref{eq:startlocal}.
We notice that there exists a constant $h_\delta>0$ such that for all $h\le h_\delta$ there holds 
$I_h(\omega^2w_h)\in V_h\cap \{v\in H^1(\Omega)\,|\, v=0\text{ a.e. in } \Omega\backslash U_{4\delta/5}\}$ and $I_h(\omega^2w_h) \equiv \omega^2w_h$ on $U_{\delta/5}$. 
Thus, using \eqref{eq:L:discHarmon_1} to insert $I_h(\omega^2w_h)$ we obtain
\begin{align*}
  \int_{U_\delta}\nabla w_h\cdot\nabla (\omega^2 w_h)
  &= \int_{U_\delta}\nabla w_h\cdot
  \nabla \left(\omega^2 w_h - I_h(\omega^2 w_h)\right)\\
  &= \int_{U_{4\delta/5}\setminus U_{\delta/5}}\nabla w_h\cdot
  \nabla \left(\omega^2 w_h - I_h(\omega^2 w_h)\right)\\
  &\leq \sum_{T\subset U_\delta\setminus U} \|\nabla w_h\|_{L^2(T)} \|\nabla\left(\omega^2 w_h - I_h(\omega^2 w_h)\right)\|_{L^2(T)}.
\end{align*}
For each element $T\subset U_\delta\setminus U$ we deduce by means of an inverse inequality and a standard interpolation error estimate
\begin{align*}
	\norm{\nabla w_h}_{L^2(T)} \norm{\nabla\left(\omega^2 w_h - I_h(\omega^2 w_h)\right)}_{L^2(T)}
	&\le C h^{-1}\|w_h\|_{L^2(T)} h |\omega^2 w_h|_{H^2(T)}\\
	&=C\|w_h\|_{L^2(T)} |\omega^2 w_h|_{H^2(T)}.
\end{align*}
Moreover, using the bounds for $\omega$ and its derivatives, we get by elementary calculations
\begin{align*}
  |\omega^2 w_h|_{H^2(T)}
  &\leq C \left( |\omega|_{W^{1,\infty}(T)}\|\omega\nabla w_h\|_{L^2(T)}
  + |\omega^2|_{W^{2,\infty}(T)}\|w_h\|_{L^2(T)}\right)\\
  &\leq C \left(
  \delta^{-1}\|\omega\nabla w_h\|_{L^2(T)} + \delta^{-2}\|w_h\|_{L^2(T)}
  \right).
\end{align*}
The previous inequalities imply
\begin{align}
  \int_{U_\delta}\nabla w_h\cdot\nabla (\omega^2 w_h)& \leq
  C\sum_{T\subset U_\delta\setminus U}
  \left(
  \delta^{-1}\|w_h\|_{L^2(T)}\|w\nabla w_h\|_{L^2(T)}
  + \delta^{-2}\|w_h\|^2_{L^2(T)}
  \right) \notag\\
  &\le \frac{1}{4}\|\omega \nabla w_h\|_{L^2(U_\delta)}^2
  + C\delta^{-2}\|w_h\|^2_{L^2(U_\delta \setminus U)},\label{eq:local2}
\end{align}
where we applied Young's inequality in the last step. We finally get the assertion from \eqref{eq:startlocal}, \eqref{eq:local1} and \eqref{eq:local2}.
\end{proof}

We now combine the previous results to deduce \eqref{eq:locdiscreteharmonic}.

\begin{theorem} \label{thm:E:D:L2D_to_L2Dd}
Let $u_\varepsilon$ and $u_{\varepsilon,h}$ be the solutions of \eqref{eq:R:PDE} and \eqref{eq:D:PDE}, respectively. In addition assume that $\psi<0$ on the boundary. Then, there exist a non-empty strip $D$ at the boundary and constants $\varepsilon_1>0$ and $h_1>0$ such that for all $\varepsilon\leq \varepsilon_1$ and $h\le h_1$ there holds
\[
	\norm{R_hu_\varepsilon-u_{\varepsilon,h}}_{H^1(D)}\le C \norm{R_hu_\varepsilon-u_{\varepsilon,h}}_{L^2(\Omega\backslash D)}.
\]
\end{theorem}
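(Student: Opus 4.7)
The plan is to exploit the inactivity of the constraint near the boundary (Lemma~\ref{lem:E:P:InactiveAtBoundary}) to show that $w_h := R_h u_\varepsilon - u_{\varepsilon,h}$ is discretely harmonic on the strip $D_d$, then to apply the Caccioppoli-type estimate of Lemma~\ref{lem:L:H1D_L2Dd} on a slightly smaller inner strip $D$, and finally to recover the $L^2$-contribution of the $H^1$-norm on $D$ via an elementary Poincaré inequality, exploiting that $w_h$ vanishes on $\partial\Omega$.

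Concretely, I would first fix $d$, $\varepsilon_0$, $h_0$ as provided by Lemma~\ref{lem:E:P:InactiveAtBoundary} and define $D := \{x\in\Omega : \operatorname{dist}(x,\partial\Omega) \le d/2\}$, so that $\operatorname{dist}(D, \partial D_d \setminus \partial\Omega) = d/2$. For any $v_h\in V_h$ with $v_h = 0$ a.e.\ on $\Omega\setminus D_d$, testing \eqref{eq:R:PDE} and \eqref{eq:D:PDE} against $v_h$ and using that both $\beta_\varepsilon(u_\varepsilon-\psi)$ and $\beta_\varepsilon(u_{\varepsilon,h}-\psi)$ vanish on $D_d$, the nonlinear terms drop out and one obtains $(\nabla u_\varepsilon,\nabla v_h) = (\nabla u_{\varepsilon,h},\nabla v_h) = (f,v_h)$. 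Combining with the Ritz identity \eqref{eq:Ritz} then yields $(\nabla w_h,\nabla v_h)=0$, that is, $w_h$ is discretely harmonic on $D_d$ in the sense of~\eqref{eq:L:discHarmon_1}.

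Applying Lemma~\ref{lem:L:H1D_L2Dd} with $U=D$, $U_\delta = D_d$, and $\delta = d/2$, I obtain a constant $h_1 \le h_0$ such that for $h\le h_1$,
\begin{equation*}
\|\nabla w_h\|_{L^2(D)} \le C\, d^{-1}\, \|w_h\|_{L^2(D_d \setminus D)} \le C\, d^{-1}\,\|w_h\|_{L^2(\Omega \setminus D)}.
\end{equation*}
Since $\Omega$ is convex, every $x\in D$ can be joined to its nearest boundary point in $\partial\Omega$ by a line segment of length at most $d/2$ lying entirely in $\overline{D}$; integrating $\nabla w_h$ along such segments and using $w_h|_{\partial\Omega} = 0$ gives the Poincaré-Friedrichs estimate $\|w_h\|_{L^2(D)} \le C d\, \|\nabla w_h\|_{L^2(D)}$. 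Combining the two bounds and setting $\varepsilon_1 := \varepsilon_0$ produces the claimed inequality, with the constant depending on the (fixed) $d$ but independent of $\varepsilon$ and $h$.

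The main step is the verification of the discrete harmonicity on $D_d$, which critically relies on having the $\beta_\varepsilon$-contributions vanish simultaneously in both the continuous and the discrete equations on the same strip; this is precisely what Lemma~\ref{lem:E:P:InactiveAtBoundary} supplies. Once this is in place, everything else is a routine combination of the already established Caccioppoli-type estimate and a one-dimensional Poincaré argument on the boundary strip.
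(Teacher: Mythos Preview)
Your proposal is correct and follows essentially the same route as the paper: you define $D$ as the half-width strip inside $D_d$, verify that $w_h=R_hu_\varepsilon-u_{\varepsilon,h}$ is discretely harmonic on $D_d$ via the vanishing of the $\beta_\varepsilon$-terms and the Ritz identity, apply Lemma~\ref{lem:L:H1D_L2Dd}, and finish with a Poincar\'e inequality using $w_h|_{\partial\Omega}=0$. The only difference is that you spell out the discrete-harmonicity computation and the Poincar\'e argument more explicitly than the paper does.
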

\begin{proof}
Define $D:=\{ x \in \Omega \,|\, \mbox{dist}(x,\partial\Omega) \le d/2 \}$, where $d$ denotes the width of the strip $D_d$ in Lemma \ref{lem:E:P:InactiveAtBoundary}.
 From the same lemma we obtain that $R_hu_\varepsilon-u_{\varepsilon,h}$ is discretely harmonic on $D_d$ for all $h\le h_0$ and $\varepsilon\le \varepsilon_1:=\varepsilon_0$ as
 $\beta_\varepsilon(u_\varepsilon-\psi)=\beta_\varepsilon(u_{\varepsilon,h}-\psi)=0$ on $D_d$. Consequently, employing Lemma \ref{lem:L:H1D_L2Dd} there exists a constant $h_d$ such that for all $h\le h_1:=\min\{h_0,h_d\}$ there holds
\[
	\norm{\nabla(R_hu_\varepsilon-u_{\varepsilon,h})}_{L^2(D)}
	\le C \norm{R_hu_\varepsilon-u_{\varepsilon,h}}_{L^2(D_d\setminus D)}
	\le C \norm{R_hu_\varepsilon-u_{\varepsilon,h}}_{L^2(\Omega\backslash D)}.
\]
As $R_hu_\varepsilon-u_{\varepsilon,h}$ fulfills homogeneous boundary conditions on $\partial\Omega$, the estimate of the assertion is finally a consequence of the Poincar\'{e} inequality.
\end{proof}
For the remainder of this section let $D$ denote a strip at the boundary where we have
\begin{equation}\label{eq:betaassumption}
	\beta_\varepsilon(u_\varepsilon-\psi)|_D=\beta_\varepsilon(u_{\varepsilon,h}-\psi)|_D=0.
\end{equation}
This is  the same strip as  introduced in Theorem~\ref{thm:E:D:L2D_to_L2Dd} when we collect all the intermediate estimates in Theorem~\ref{thm:E:totalerror}. 
In a next step for the
final result, we estimate $u_\varepsilon-u_{\varepsilon,h}$ in $L^\infty(\Omega\backslash D)$. As already announced, we use a duality argument for that purpose. For the corresponding dual problem, we define
\begin{equation}\label{eq:b}
	b :=
	\begin{cases}
		[\beta_\varepsilon(\ue - \psi) - \beta_\varepsilon(\uh- \psi)]/(\ue-\uh) & \mbox{ if } (\ue-\uh)(x) \ne  0,\\
		0 &  \mbox{ else}.
	\end{cases}
\end{equation}
Note that $0\le b \le \varepsilon^{-1}$ almost everywhere in $\Omega$. Moreover, let 
\begin{equation}\label{eq:delta}
	\tilde \delta \text{ be a function from } C^\infty(\Omega) \text{ with } \operatorname{supp} {\tilde\delta}\subset \Omega\setminus D \text{ and } \lVert {\tilde\delta} \rVert_{L^1(\Omega)} \leq 1.
\end{equation}
Then, we define $G \in H^1_0(\Omega)$ as the weak solution to the dual problem
\begin{equation} \label{eq:E:defG_dual}
	\begin{split}
		-\Delta G +  bG & = {\tilde\delta}\quad \mbox{ in } \Omega,\\
		G &= 0 \quad \mbox{ on } \partial\Omega.
	\end{split}
\end{equation}
Before applying the duality argument in Theorem \ref{thm:E:I:ue_m_uh_dual}, let us state several auxiliary results.

\begin{lemma}\label{lem:bG} Let $D$ with $\abs{D}\ge0$ be a strip at the boundary where \eqref{eq:betaassumption} holds. Moreover, let $b$ and $\tilde\delta$ be the functions from \eqref{eq:b} and $\eqref{eq:delta}$, respectively, and let $G\in H^1_0(\Omega)$ be the solution of \eqref{eq:E:defG_dual}. Then, there holds
	\begin{enumerate}[label=(\roman*)]
	\item $\norm{bG}_{L^1(\Omega)}\le 1,$
	\item $\operatorname{supp} bG \subset \Omega\setminus D$.
	\end{enumerate}
\end{lemma}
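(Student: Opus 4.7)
Part (ii) will drop out of the definitions. On $D$ we have $\beta_\varepsilon(u_\varepsilon-\psi) = \beta_\varepsilon(u_{\varepsilon,h}-\psi) = 0$ by assumption \eqref{eq:betaassumption}, so in both branches of \eqref{eq:b} (whether $u_\varepsilon(x) \ne u_{\varepsilon,h}(x)$ or not) the function $b$ vanishes almost everywhere on $D$. Hence $bG = 0$ a.e.\ on $D$, which is precisely (ii).

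For (i), my plan is to test the weak form of \eqref{eq:E:defG_dual} with a smooth approximation of $\sgn(G)$. The monotonicity of $\beta_\varepsilon$ gives $b \ge 0$ almost everywhere in $\Omega$, and this sign is the only structural information I will use. Choosing $\sigma_\eta(t) := t/\sqrt{t^2+\eta^2}$ for $\eta>0$ (any smooth, odd, non-decreasing approximant of $\sgn$ with $|\sigma_\eta|\le 1$ and $\sigma_\eta(0)=0$ would serve equally well), the composition $\sigma_\eta(G)$ lies in $H^1_0(\Omega)\cap L^\infty(\Omega)$ and is therefore admissible as a test function in \eqref{eq:E:defG_dual}. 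Inserting it yields
\[
 \int_\Omega \sigma_\eta'(G)\,|\nabla G|^2 + \int_\Omega b\,G\,\sigma_\eta(G) = \int_\Omega \tilde\delta\,\sigma_\eta(G).
\]
The first term on the left is non-negative since $\sigma_\eta'\ge 0$, while the right-hand side is bounded in modulus by $\|\tilde\delta\|_{L^1(\Omega)}\,\|\sigma_\eta(G)\|_{L^\infty(\Omega)}\le 1$, giving $\int_\Omega b\,G\,\sigma_\eta(G) \le 1$.

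To finish I pass to the limit $\eta\to 0$. Because $b\ge 0$, the integrand $b\,G\,\sigma_\eta(G) = b\,G^2/\sqrt{G^2+\eta^2}$ increases pointwise to $b|G| = |bG|$, so monotone convergence gives $\|bG\|_{L^1(\Omega)} = \int_\Omega b|G| \le 1$, which is (i). I do not anticipate any real obstacle in this argument; the whole computation is a standard Stampacchia-style $L^1$ estimate for linear second-order elliptic equations, and it is exactly the sign $b\ge 0$ inherited from the monotonicity of $\beta_\varepsilon$ that makes the bound work so cleanly, without any need to invoke finer properties of $G$ such as a maximum principle or $L^\infty$ control.
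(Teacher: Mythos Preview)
Your proof is correct and follows essentially the same approach as the paper: both test \eqref{eq:E:defG_dual} with a regularized sign function of $G$ (the paper uses $\sgn_t(x)=x/\sqrt{x^2+t}$, which is your $\sigma_\eta$ up to the reparametrization $t=\eta^2$), drop the non-negative gradient term, bound the right-hand side by $\|\tilde\delta\|_{L^1}\le 1$, and pass to the limit using $b\ge 0$. Part (ii) is handled identically in both proofs.
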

\begin{proof}
(i) For $t>0$ we define the regularized sign function $\sgn_t(x) := \frac{x}{\sqrt{x^2+t}}$. 
Testing \eqref{eq:E:defG_dual} with $\sgn_t(G)$ yields
\begin{align*}
1 &\geq ({\tilde\delta},\sgn_t(G)) = (\nabla G, \sgn_t^\prime(G) \nabla G) + (bG, \sgn_t(G)).
\end{align*}
As a consequence, by means of the monotonicity of $\sgn_t$, we get $1\geq (bG, \sgn_t(G))_{L^2(\Omega)}$. 
Sending $t$ to zero and recalling that $b\geq 0$ yields 
$1 \geq \int_{\Omega} b\sgn(G)G = \lVert bG \rVert_{L^1(\Omega)}$. 
	
(ii) According to \eqref{eq:betaassumption} we have $\beta_\varepsilon(u_\varepsilon-\psi)=\beta_\varepsilon(u_{\varepsilon,h}-\psi)=0$ a.e. on $D$ such that $b=0$ a.e. on $D$, and hence $bG=0$ a.e. on $D$.
\end{proof}

\begin{lemma}
\label{lem:E:I:GmGh_L1_dual}
Let $D$ with $\abs{D}>0$ (independent of $\varepsilon$ and $h$) be a strip at the boundary where \eqref{eq:betaassumption} holds. Moreover, let $b$ and $\tilde\delta$ be the functions from \eqref{eq:b} and $\eqref{eq:delta}$, respectively. Then, there exists a constant $h_d>0$ such that for all $h\le h_d$ the solution $G$ of \eqref{eq:E:defG_dual} and its Ritz-projection $R_hG$ fulfill
\begin{align*}
  \|G-R_hG\|_{L^1(\Omega)} 
  \leq  C   h^2\abs{\log h}^2
\end{align*}
with a constant $C>0$ independent of $\varepsilon$, $h$ and $\tilde\delta$.
\end{lemma}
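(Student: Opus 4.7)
The plan is to apply an $L^\infty$--$L^1$ duality argument to reduce the claim to an \emph{interior} pointwise estimate for the Ritz-projection error of an auxiliary Poisson problem, and then to exploit the local $W^{2,p}$-regularity supplied by Lemma~\ref{lem:L:local_W2p}. We fix $\phi\in L^\infty(\Omega)$ with $\|\phi\|_{L^\infty(\Omega)}\le 1$, and let $z\in H^1_0(\Omega)$ denote the unique solution to $-\Delta z=\phi$ in $\Omega$ with $z=0$ on $\partial\Omega$. Using Galerkin orthogonality for $R_hG$ to insert $R_hz\in V_h$, and the symmetric Galerkin orthogonality~\eqref{eq:Ritz} for $R_hz$ to cancel the $R_hG$-contribution,
\begin{align*}
(G-R_hG,\phi)=(\nabla(G-R_hG),\nabla z)=(\nabla(G-R_hG),\nabla(z-R_hz))=(\nabla G,\nabla(z-R_hz)).
\end{align*}
Testing~\eqref{eq:E:defG_dual} by $z-R_hz\in H^1_0(\Omega)$ rewrites the right-hand side as $(\tilde\delta-bG,z-R_hz)$. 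Since by~\eqref{eq:delta} and Lemma~\ref{lem:bG} the functions $\tilde\delta$ and $bG$ are both supported in $\Omega\setminus D$ and have $L^1$-norm at most $1$, we arrive at
\begin{align*}
|(G-R_hG,\phi)|\le 2\,\|z-R_hz\|_{L^\infty(\Omega\setminus D)},
\end{align*}
and choosing $\phi=\sgn(G-R_hG)$ reduces the proof to showing $\|z-R_hz\|_{L^\infty(\Omega\setminus D)}\le Ch^2|\log h|^2$.

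Because $D$ has a fixed positive width $d$ independent of $\varepsilon$ and $h$, the set $\Omega\setminus D$ has a fixed positive distance to $\partial\Omega$. We select strictly interior $C^{1,1}$-subsets $\Omega\setminus D\subset U_1\subset U_2\subset\Omega$ whose boundaries have pairwise positive distances depending only on $d$, and apply a Schatz--Wahlbin type local maximum-norm estimate for the Ritz-projection (see, e.g.,~\cite{Demlow2011}) to obtain
\begin{align*}
\|z-R_hz\|_{L^\infty(\Omega\setminus D)}\le C|\log h|\,\|z-I_hz\|_{L^\infty(U_1)}+C\,\|z-R_hz\|_{L^2(\Omega)}.
\end{align*}
The global $L^2$-contribution is handled on the convex polyhedron $\Omega$ by the standard Aubin--Nitsche argument, $\|z-R_hz\|_{L^2(\Omega)}\le Ch^2\|\phi\|_{L^2(\Omega)}\le Ch^2$. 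For the local interpolation term, Lemma~\ref{lem:L:local_W2p} applied to the pair $U_1\subset U_2$ yields
\begin{align*}
\|z\|_{W^{2,p}(U_1)}\le Cp\bigl(\|\phi\|_{L^p(U_2)}+\|\phi\|_{L^2(\Omega)}\bigr)\le Cp\quad\text{for every }p\in[2,\infty),
\end{align*}
whence elementwise interpolation combined with the embedding $W^{2,p}\hookrightarrow C^0$ for $p>N/2$ and quasi-uniformity of the mesh gives $\|z-I_hz\|_{L^\infty(U_1)}\le Cp\,h^{2-N/p}$.

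It remains to balance $p$ against $h$: the choice $p=|\log h|$ turns $h^{-N/p}$ into the constant $e^N$, so $\|z-I_hz\|_{L^\infty(U_1)}\le Ch^2|\log h|$, and together with the $L^2$-bound this produces $\|z-R_hz\|_{L^\infty(\Omega\setminus D)}\le Ch^2|\log h|^2$. Substituting back into the duality estimate from the first paragraph closes the argument. The main obstacle is the absence of global $W^{2,\infty}$-regularity of $z$ on a convex polyhedral domain, which would otherwise allow a direct global Rannacher--Scott bound for $\|z-R_hz\|_{L^\infty(\Omega)}$; the localization to $\Omega\setminus D$, made possible by the support properties of $\tilde\delta$ and $bG$, is exactly what lets us replace the missing global regularity by the interior $W^{2,p}$-estimate of Lemma~\ref{lem:L:local_W2p} at the mild cost of an additional $|\log h|$-factor.
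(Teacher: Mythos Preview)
Your proof is correct and follows essentially the same route as the paper's: both introduce the dual problem $-\Delta z=\sgn(G-R_hG)$, use Galerkin orthogonality and the support properties of $\tilde\delta$ and $bG$ from Lemma~\ref{lem:bG} to reduce to $\|z-R_hz\|_{L^\infty(\Omega\setminus D)}$, then combine a local $L^\infty$-estimate for the Ritz projection with the interior $W^{2,p}$-bound of Lemma~\ref{lem:L:local_W2p} and the choice $p=|\log h|$. The only cosmetic differences are your labeling of the intermediate interior sets $U_1\subset U_2$ (the paper uses $\Omega\setminus D'$ with $D'\subset D$ smoothly bounded) and your citation of \cite{Demlow2011} rather than \cite{1991_HandbookOfNumAna__Wahlbin_LocalBehaviorFEM} for the local maximum-norm estimate.
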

\begin{proof}
Let $z \in H^1_0(\Omega)$ denote the unique weak solution to
\begin{alignat*}{2}
  -\Delta z &= \sgn(G-R_hG) &&\quad \mbox{in } \Omega,\\
  z &= 0 &&\quad \mbox{on } \partial\Omega.
\end{alignat*}
By means of this equation, the orthogonality of the Ritz-projection, \eqref{eq:E:defG_dual} and Lemma~\ref{lem:bG}, we obtain
\begin{align*}
\norm{G-R_hG}_{L^1(\Omega)}
&=(G-R_hG,\sgn(G-R_hG))
=(\nabla (G-R_hG),\nabla z)\\
&=(\nabla (G-R_hG),\nabla(z-R_hz))
= (\nabla G, \nabla (z-R_hz))\\
&=({\tilde\delta}-bG, z-R_hz)_{L^2(\Omega\setminus D)}
\leq (\norm{{\tilde\delta}}_{L^1(\Omega)}+\norm{bG}_{L^1(\Omega)})\|z-R_hz\|_{L^\infty(\Omega\setminus D)}\\
&\le 2 \|z-R_hz\|_{L^\infty(\Omega\setminus D)}.
\end{align*}
For technical reasons, we have to introduce another subset $D'$ of $\Omega$, which is smoothly bounded, fulfills $D'\subset D$, and has a fixed and positive distance to $D$ and to $\partial \Omega$.
Using local $L^\infty$-error estimates from \cite[Theorem 10.1]{1991_HandbookOfNumAna__Wahlbin_LocalBehaviorFEM} in combination with a standard interpolation error estimate we get
for $h$ small enough
\begin{align*}
  \|z-R_hz\|_{L^\infty(\Omega\setminus D)}
  &\leq C \left(h^{2-\frac{N}{p}} \abs{\log h} \norm{z}_{W^{2,p}(\Omega\setminus D')}
  + \norm{z-R_hz}_{L^2(\Omega)}\right).
\end{align*}
A standard $L^2(\Omega)$-error estimate for the Ritz-projection together with elliptic regularity for $z$, and Lemma \ref{lem:L:local_W2p} implies

\begin{align*}
  \|z-R_hz\|_{L^\infty(\Omega\setminus D)}
  & \leq C\left( ph^{2-\frac{N}{p}}\abs{\log h} \norm{\sgn(G-R_hG)}_{L^p(\Omega)}
  + h^2 \norm{\sgn(G-R_hG)}_{L^2(\Omega)}\right)\\
  &\le C h^2 \abs{\log h}^2(p\abs{\log h}^{-1}h^{-\frac{N}{p}}+1),
\end{align*}
where we used that $\norm{\sgn(G-R_hG)}_{L^\infty(\Omega)}\le 1$. If we set $p=\abs{\log h}$, the desired result follows as $h^{-\frac{N}{\abs{\log h}}}=e^N$.
\end{proof}

\begin{theorem} \label{thm:E:I:ue_m_uh_dual}
Let $D$ with $\abs{D}>0$ (independent of $\varepsilon$ and $h$) be a strip at the boundary where \eqref{eq:betaassumption} holds. Moreover, let $u_\varepsilon$ and $u_{\varepsilon,h}$ be the solutions of \eqref{eq:R:PDE} and \eqref{eq:D:PDE}, respectively. Then, there exists a constant $h_d>0$ such that for all $h\le h_d$ there holds
\begin{equation*}
\|\ue-\uh\|_{L^\infty(\Omega\setminus D)} \leq C h^2 \abs{\log h}^2 ( \|f \|_{L^\infty(\Omega)} + \|\Delta\psi \|_{L^\infty(\Omega)} )
\end{equation*}
with a constant $C>0$ independent of $\varepsilon$ and $h$.
\end{theorem}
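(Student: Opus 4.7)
The strategy is an $L^\infty$--$L^1$ duality argument exploiting the test function $\tilde\delta$ from \eqref{eq:delta} and the associated dual solution $G$ from \eqref{eq:E:defG_dual}. Because $u_\varepsilon-u_{\varepsilon,h}$ is continuous on $\overline\Omega$, the $L^\infty(\Omega\setminus D)$-norm is characterized by taking the supremum of $(u_\varepsilon-u_{\varepsilon,h},\tilde\delta)$ over all admissible $\tilde\delta$. It therefore suffices to bound $|(\tilde\delta,u_\varepsilon-u_{\varepsilon,h})|$ uniformly in $\tilde\delta$ by $Ch^2|\log h|^2(\|f\|_{L^\infty(\Omega)}+\|\Delta\psi\|_{L^\infty(\Omega)})$; the claimed estimate then follows by passing to the supremum.

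Set $e:=u_\varepsilon-u_{\varepsilon,h}$. Subtracting \eqref{eq:R:PDE} and \eqref{eq:D:PDE} and invoking the definition \eqref{eq:b} of $b$ yields the Galerkin relation $(\nabla e,\nabla v_h)+(be,v_h)=0$ for all $v_h\in V_h$. Testing the dual equation \eqref{eq:E:defG_dual} against $e$ produces $(\tilde\delta,e)=(\nabla G,\nabla e)+(bG,e)$, and subtracting the Galerkin relation with $v_h=R_h G$ rewrites this as
\[
(\tilde\delta,e)=(\nabla(G-R_h G),\nabla e)+(be,G-R_h G).
\]
For the first summand I apply Ritz orthogonality of $G-R_h G$ against $V_h$ to discard the $u_{\varepsilon,h}$-contribution, so only $(\nabla u_\varepsilon,\nabla(G-R_h G))$ remains. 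Since $u_\varepsilon\in H^2(\Omega)\cap H^1_0(\Omega)$ by Lemma~\ref{lem:R:boundedregularized} and $G-R_h G\in H^1_0(\Omega)$, integration by parts combined with $-\Delta u_\varepsilon=f-\beta_\varepsilon(u_\varepsilon-\psi)$ converts this term into $(f-\beta_\varepsilon(u_\varepsilon-\psi),G-R_h G)$. Using $be=\beta_\varepsilon(u_\varepsilon-\psi)-\beta_\varepsilon(u_{\varepsilon,h}-\psi)$ from the very definition of $b$, the two $\beta_\varepsilon$ contributions telescope and leave the key identity
\[
(\tilde\delta,e)=(f-\beta_\varepsilon(u_{\varepsilon,h}-\psi),\,G-R_h G).
\]

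Because the nonlinearity is truncated at level $\lambda$ in \eqref{eq:R:betaEps}, the first factor is uniformly bounded in $L^\infty(\Omega)$ by $C(\|f\|_{L^\infty(\Omega)}+\|\Delta\psi\|_{L^\infty(\Omega)})$, so H\"older's inequality reduces everything to $\|G-R_h G\|_{L^1(\Omega)}$, which is exactly the quantity already controlled by Lemma~\ref{lem:E:I:GmGh_L1_dual}. The conceptual hurdle that could derail such a duality argument is the mixed term $(\nabla(G-R_h G),\nabla e)$: the dual solution behaves essentially like a Green's function, so its $H^1$-energy is not uniformly bounded in $\tilde\delta$ and a naive Cauchy--Schwarz would fail. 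The decisive trick is the integration by parts above, which shifts the derivative onto $u_\varepsilon$ and trades the missing regularity of $G$ for the $H^2$-regularity of $u_\varepsilon$ together with the pointwise bound on $\beta_\varepsilon(u_\varepsilon-\psi)$; after that, all the genuinely hard technicalities (the local $W^{2,p}$-estimates of Lemma~\ref{lem:L:local_W2p} and the $|\log h|^2$ factor) are already packaged inside Lemma~\ref{lem:E:I:GmGh_L1_dual}.
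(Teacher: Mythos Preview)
Your proof is correct and follows essentially the same route as the paper's: both use the $L^\infty$--$L^1$ duality via the auxiliary solution $G$ of \eqref{eq:E:defG_dual}, subtract the Galerkin relation to insert $R_hG$, then exploit Ritz orthogonality together with the weak form \eqref{eq:R:PDE} to arrive at $(\tilde\delta,e)=(f-\beta_\varepsilon(u_{\varepsilon,h}-\psi),G-R_hG)$, after which the truncation bound and Lemma~\ref{lem:E:I:GmGh_L1_dual} finish the argument. The only cosmetic difference is that you phrase the step $(\nabla u_\varepsilon,\nabla(G-R_hG))=(f-\beta_\varepsilon(u_\varepsilon-\psi),G-R_hG)$ as an integration by parts using the strong equation, whereas the paper invokes the variational identity \eqref{eq:R:PDE} directly; these are equivalent.
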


\begin{proof}
As $L^\infty(\Omega\setminus D)=(L^1(\Omega\setminus D))^*$ we have that
\[
\lVert \ue-\uh \rVert_{L^\infty(\Omega\setminus D)} 
= \sup_{\substack{{\tilde\delta}\in C^\infty(\Omega)\\ 
\operatorname{supp} {\tilde\delta} \subset \Omega\setminus D\\ 
\lVert {\tilde\delta} \rVert_{L^1(\Omega)} \leq 1}} 
\left|\int_{\Omega}(\ue-\uh) {\tilde\delta}\right|.
\]
Let such a ${\tilde\delta}$ be the right hand side of \eqref{eq:E:defG_dual}. 
Consequently, we get
\begin{align*}
\int_{\Omega}(\ue-\uh){\tilde\delta}
  &= (\nabla (\ue-\uh),\nabla G) + (\beta_\varepsilon(u_\varepsilon-\psi)-\beta_\varepsilon(u_{\varepsilon,h}-\psi),G) \\
  &= (\nabla (\ue-\uh),\nabla (G-R_hG) + (\beta_\varepsilon(u_\varepsilon-\psi)-\beta_\varepsilon(u_{\varepsilon,h}-\psi),G-R_hG),
\end{align*}
where we also used \eqref{eq:R:PDE} and \eqref{eq:D:PDE}. The orthogonality of the Ritz-projection and~\eqref{eq:R:PDE} imply
\begin{align}
\int_{\Omega}(\ue-\uh){\tilde\delta}& = (\nabla \ue,\nabla (G-R_hG) + (\beta_\varepsilon(u_\varepsilon-\psi)-\beta_\varepsilon(u_{\varepsilon,h}-\psi),G-R_hG)\nonumber\\
  & = (f - \beta_\varepsilon(\uh-\psi),G-R_hG)\nonumber\\
  &\le \norm{f - \beta_\varepsilon(\uh-\psi)}_{L^\infty(\Omega)}\norm{G-R_hG}_{L^1(\Omega)}. \label{eq:E:ue_m_uh_leq_data}
\end{align}
The assertion follows from the boundedness $\|\beta_\varepsilon(\uh-\psi)\|_{L^\infty(\Omega)} \leq c\|f+\Delta \psi\|_{L^\infty(\Omega)}$ according to \eqref{eq:R:betaEps}, and Lemma \ref{lem:E:I:GmGh_L1_dual}.
\end{proof}

If we now combine the results from Theorem~\ref{thm:R:LinfError}, Lemma~\ref{lem:E:P:InactiveAtBoundary}, Theorem~\ref{thm:E:D:L2D_to_L2Dd}, and Theorem~\ref{thm:E:I:ue_m_uh_dual}, 
as outlined in \eqref{eq:assembling}, we obtain the following result.
\begin{theorem} \label{thm:E:totalerror}
Let $u$, $u_\varepsilon$ and $u_{\varepsilon,h}$ be the solutions of \eqref{eq:C:VI}, \eqref{eq:R:PDE} and \eqref{eq:D:PDE}, respectively. In addition assume that $\psi<0$ on the boundary.
Then, there exist constants $\varepsilon_d>0$ and $h_d>0$ 
such that for all $\varepsilon\le \varepsilon_d$ and $h\le h_d$ there holds 
\begin{equation*}
  \|u-\uh\|_{L^2(\Omega)} \leq  C\left(\varepsilon + h^2\abs{\log h}^2\right) ( \|f \|_{L^\infty(\Omega)} + \|\Delta\psi \|_{L^\infty(\Omega)} )
\end{equation*}
with a constant $C>0$ independent of $\varepsilon$ and $h$, and using $\varepsilon = \mathcal O(h^2\abs{\log h}^2)$ we get
\begin{equation*}
  \|u-\uh\|_{L^2(\Omega)} \leq  Ch^2\abs{\log h}^2 ( \|f \|_{L^\infty(\Omega)} + \|\Delta\psi \|_{L^\infty(\Omega)} ).
\end{equation*}
\end{theorem}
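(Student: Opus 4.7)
The proof is essentially the assembly step outlined in display \eqref{eq:assembling}: everything substantial has been packaged into the preceding lemmas and theorems, and what remains is to stitch them together correctly. I would start with the triangle inequality
\[
\|u - u_{\varepsilon,h}\|_{L^2(\Omega)} \le \|u - u_\varepsilon\|_{L^2(\Omega)} + \|u_\varepsilon - u_{\varepsilon,h}\|_{L^2(\Omega)},
\]
where the first term is immediately controlled by Theorem~\ref{thm:R:LinfError} (together with the boundedness of $\Omega$, which turns the $L^\infty$-bound into an $L^2$-bound of the same order $\varepsilon$). All the real work goes into the second summand.

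For the discretization error, I would fix $\varepsilon \le \varepsilon_1$ and $h \le h_1$ as in Theorem~\ref{thm:E:D:L2D_to_L2Dd}, and let $D$ be the corresponding boundary strip on which \eqref{eq:betaassumption} holds. Introducing the Ritz projection $R_h u_\varepsilon$ as an intermediate function and splitting the integration domain as $\Omega = D \cup (\Omega \setminus D)$, I obtain
\[
\|u_\varepsilon - u_{\varepsilon,h}\|_{L^2(\Omega)} \le \|u_\varepsilon - R_h u_\varepsilon\|_{L^2(D)} + \|R_h u_\varepsilon - u_{\varepsilon,h}\|_{L^2(D)} + \|u_\varepsilon - u_{\varepsilon,h}\|_{L^2(\Omega \setminus D)}.
\]
By Theorem~\ref{thm:E:D:L2D_to_L2Dd}, the middle term is bounded by $C\|R_h u_\varepsilon - u_{\varepsilon,h}\|_{L^2(\Omega \setminus D)}$, and another triangle inequality together with $R_h u_\varepsilon$ as intermediate function gives overall
\[
\|u_\varepsilon - u_{\varepsilon,h}\|_{L^2(\Omega)} \le C \bigl(\|u_\varepsilon - R_h u_\varepsilon\|_{L^2(\Omega)} + \|u_\varepsilon - u_{\varepsilon,h}\|_{L^2(\Omega \setminus D)}\bigr).
\]

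Now each of the two remaining terms is controlled by a standard tool. The Ritz-projection term is estimated by the classical Aubin--Nitsche duality bound $\|u_\varepsilon - R_h u_\varepsilon\|_{L^2(\Omega)} \le C h^2 \|u_\varepsilon\|_{H^2(\Omega)}$, which is admissible here because $\Omega$ is convex, and then the $H^2$-bound from Lemma~\ref{lem:R:boundedregularized} turns the right-hand side into $Ch^2(\|f\|_{L^\infty(\Omega)} + \|\Delta\psi\|_{L^\infty(\Omega)})$. The interior term is estimated via H\"older's inequality by $|\Omega|^{1/2}\|u_\varepsilon - u_{\varepsilon,h}\|_{L^\infty(\Omega \setminus D)}$, and Theorem~\ref{thm:E:I:ue_m_uh_dual} provides the desired $C h^2 |\log h|^2(\|f\|_{L^\infty(\Omega)} + \|\Delta\psi\|_{L^\infty(\Omega)})$ bound.

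Putting everything together, for $\varepsilon \le \varepsilon_d := \varepsilon_1$ and $h \le h_d := \min\{h_1, h_d^{\text{dual}}\}$ (the $h_d$ from Theorem~\ref{thm:E:I:ue_m_uh_dual}), the first inequality of the theorem follows; the second inequality is obtained by choosing $\varepsilon = \mathcal{O}(h^2|\log h|^2)$ in order to balance the two contributions. There is no genuine obstacle here; the only point requiring a bit of care is verifying that the strip $D$ produced by Theorem~\ref{thm:E:D:L2D_to_L2Dd} is precisely the one assumed in \eqref{eq:betaassumption} used by Theorem~\ref{thm:E:I:ue_m_uh_dual}, so that the same $D$ serves both roles simultaneously and the constants do not depend on $\varepsilon$ or $h$.
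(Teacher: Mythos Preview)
Your proposal is correct and follows exactly the assembly outlined by the paper in \eqref{eq:assembling}: split off the regularization error via Theorem~\ref{thm:R:LinfError}, decompose the discretization error using the Ritz projection and the boundary strip $D$, then invoke Theorem~\ref{thm:E:D:L2D_to_L2Dd}, the standard $L^2$ Ritz-projection estimate with Lemma~\ref{lem:R:boundedregularized}, and Theorem~\ref{thm:E:I:ue_m_uh_dual}. The paper's own proof is literally a one-line reference to this chain of results, so you have simply written out what the authors left implicit.
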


We close this section with some remarks on certain additional aspects of our approach.

\begin{remark}[Inactivity at the boundary \texorpdfstring{$\partial\Omega$}{Lg}]
The previous results are derived under the assumption that the obstacle is inactive on the boundary. This is due to the appearance of singular terms within the primal and dual solutions
at the singular points of the boundary, which are the corners of the domain for $N=2$, and the corners and edges for $N=3$. However, the singularities are local phenomena. Away from the singular points, the regularity of the primal and dual solutions is only limited by the regularity of the data and the obstacle. For that reason, it is also sufficient to only assume inactivity of $\psi$ on the boundary at the singular points.
\end{remark}

\begin{remark}[Non-convex domains]
	Throughout the whole paper, we have assumed that the domain is convex. Let us briefly comment on the non-convex case. As already noticed in the previous remark, the singularities are only local phenomena around the singular points. Thus, the $W^{2,p}(\Omega\backslash D')$ regularity in the interior of the domain still holds. Only the $H^2(\Omega)$ regularity up to the boundary might no longer be true. Consequently, in the non-convex case, one only has to replace the estimates for
	\[
		\lVert u_\varepsilon-R_hu_\varepsilon\rVert_{L^2(\Omega)} \text{ in \eqref{eq:assembling} and }\lVert z-R_hz\rVert_{L^2(\Omega)} \text{ within the proof of Lemma \ref{lem:E:I:GmGh_L1_dual}}
	\]
	by the correspondingly adapted estimates. Moreover, as the results of Lemma \ref{lem:L:H1D_L2Dd} will also hold on sharply varying grids (see the discussion before Lemma \ref{lem:L:H1D_L2Dd}), it is also possible to use mesh grading techniques to retain the full order of convergence for the critical terms from above. 
\end{remark}

\section{Numerical Validation}
\label{sec:N}
For the numerical realization of the fully discrete equation~\eqref{eq:D:PDE}
we employ the finite element toolbox \texttt{Gascoigne} \cite{gascoigne}. 
To access  \texttt{Gascoigne} we use the optimization toolbox \texttt{RoDoBo} \cite{rodobo} as an interface. The resulting non-linear and non-smooth systems of equations corresponding to~\eqref{eq:D:PDE} 
are solved numerically by means of the semi-smooth Newton method (see e.g. \cite{Ulbrich2009})
provided by \texttt{Gascoigne}. In each of the numerical examples of the subsequent Sections~\ref{sec:N:sharpness} and~\ref{ssec:largestinnerangle} the discrete subspaces $V_h$ are constructed by piecewise bilinear and globally continuous functions on a sequence of subdivisions of $\Omega$ into
quadrilaterals. The computational domains $\Omega\subset \R^2$ are exactly specified below. Moreover, we fix $f=-30$ and $\psi = -1$. The constant $c$ in the definition of $\lambda$ \eqref{eq:lambda} is chosen as $c=6$. 

The example in Section~\ref{sec:N:sharpness} shows that the convergence rates of $u_{\varepsilon,h}$ in terms of $\varepsilon$ and $h$ are sharp. More precisely, we see that the exponents of $\varepsilon$ and $h\abs{\log h}$ in
\begin{align}
	\label{eq:N:errL2}
	\lVert u - \uh\rVert_{L^2(\Omega)} \leq C(\varepsilon + h^2|\log h|^2),
\end{align}
proven in Theorem~\ref{thm:E:totalerror}, can essentially not be improved.

The example in Section~\ref{ssec:largestinnerangle} studies the influence of the largest interior angle of a polygonal domain on the convergence rates in $L^2(\Omega)$ and $L^\infty(\Omega)$. It illustrates that the result of Theorem~\ref{thm:E:totalerror}, and hence estimate \eqref{eq:N:errL2}, is valid in general convex domains. However, the convergence rates in $L^\infty(\Omega)$ may be reduced depending on the largest interior angle due to the appearance of corner singularities. Let us denote by $\alpha\in [\pi/3,\pi)$ the largest interior angle of the domain. Then, one can show (neglecting $\log$-terms)
\begin{align}
	\label{eq:N:errLinf}
	\lVert u - \uh\rVert_{L^\infty(\Omega)} \leq C ( \varepsilon + h^{\min\{2,\pi/\alpha\}-\delta})
\end{align}
for an arbitrarily small $\delta>0$. For instance, this can be deduced from \cite[Lem. 2.2 and Thm. 2.3]{Nochetto_1988_Sharp_Linfty_semilinear_elliptic_Free_boundaries} 
having in mind the reduced regularity stemming from the corner singularities.

Before turning our attention to the numerical examples, we notice that we use reference solutions (computed on a fine mesh and with a small regularization parameter) for the purpose of comparison, 
as we do not have analytic solutions to any of our numerical examples.

\subsection{Validation of the Discretization Error Estimates in $L^2(\Omega)$} \label{sec:N:sharpness}
In this section we verify \eqref{eq:N:errL2}. As underlying domain we choose the unit square. 
The reference solution is computed with $\varepsilon_{ref} = 10^{-8}$ and $h_{ref} = 0.5^{10} \approx 10^{-3}$.
From the structure of the estimate one  expects that for small $h$ 
the total error is dominated by the error caused by 
$\varepsilon$ and vice versa.  
To show this, we calculate  solutions $\uh$ to  \eqref{eq:D:PDE}  for sequences $\varepsilon$ and $h$ tending to zero.
In Figure~\ref{fig:epstozerovarioush} we show $\lVert \uh-u_{ref} \rVert_{L^2(\Omega)}$ as a function of $\varepsilon$
for fixed values of $h$, while in Figure~\ref{fig:htozerovariouseps} we show $\lVert \uh-u_{ref} \rVert_{L^2(\Omega)}$ as a function of $h$ for fixed values of $\varepsilon$.

\begin{figure}
	\centering
	\input{imgs/epstozerodifferenth.tex}
	\caption{Sequence of errors for sequences of $h$ and $\varepsilon$ tending to zero.}
	\label{fig:epstozerovarioush}
\end{figure}

In Figure~\ref{fig:epstozerovarioush} we observe, that for every fixed $h$, the error becomes stationary for small $\varepsilon$ and cannot be further reduced by reducing $\varepsilon$. Hence, the discretization error is dominating in this case. 
Moreover, for $h$ sufficiently small we observe first order convergence in terms of $\varepsilon$. This is in agreement with our theoretical findings, see \eqref{eq:N:errL2}.
An analogous result is observed in Figure~\ref{fig:htozerovariouseps}, but with $\varepsilon$ and $h$ changing their roles. Of course, in terms of $h$ we see a convergence rate of close to two.

\begin{figure}
\centering
\input{imgs/htozerodifferenteps.tex}
\caption{Sequence of errors for sequences of $h$ and $\varepsilon$ tending to zero.}
\label{fig:htozerovariouseps}
\end{figure}

\subsection{Influence of the Largest Interior Angle on the Error in $L^2$ and $L^\infty$}
\label{ssec:largestinnerangle}

In this section we verify \eqref{eq:N:errL2} and \eqref{eq:N:errLinf} on domains $\Omega$ with varying largest interior angle.
The reference solution for each experiment is calculated with $\varepsilon_{ref} = 10^{-4}$ and $h_{ref} = 0.5^9 \approx 2\cdot 10^{-3}$.
Moreover we fix $\varepsilon = 10^{-4}$ for all experiments, and hence, we only investigate the error behavior with respect to $h$ depending on the largest interior angle.
As computational domains, we consider the domains $\Omega_\alpha$ with largest interior angle $\alpha\in[\pi/2,\pi)$ which are defined by
\begin{align*}
\bar\Omega_\alpha := \operatorname{conv} \{ (0,0), (1,0), (0,1), ( 1 + \tan( \alpha / 2 ) )^{-1} (1,1) \}.
\end{align*}
In particular, the case $\alpha = \frac{\pi}{2}$ leads to the unit square $(0,1)^2$, while
for $\alpha\rightarrow\pi$ the domain $\Omega_\alpha$ degenerates to a rectangular triangle.

We perform experiments for three particular domains with largest interior angle $\pi/2$, $3\pi/4$, and $17\pi/18$.
Our observations are presented in 
Tables~\ref{fig:largestangleinfluenceDomain1} -- \ref{fig:largestangleinfluenceDomain3}. 
Here $\eta_h^{L^p} := \lVert \uh -u _{ref}\rVert_{L^p(\Omega)}$ 
abbreviates the error between the numerical solution $\uh$ and the reference solution $u_{ref}$
in the $L^p$-norm ($p \in \{2,\infty\}$). 
For sequences $(h_k)$ and $(\eta_k)$ we define the experimental order of convergence (EOC)
by 
\begin{align*}
\mathrm{EOC}_k = \frac{\log(\eta_k) - \log(\eta_{k-1})}{\log(h_k) - \log(h_{k-1})}
\end{align*}
as an approximation to the convergence rate of $(\eta_k)$ with respect to $(h_k)$.
We observe that the experimental orders of convergence for $\eta_h^{L^2}$ are two on all three domains, 
as expected from \eqref{eq:N:errL2}.
In case of $\eta_h^{L^\infty}$, we observe a decreasing convergence rate for an increasing largest interior angle. The corresponding experimental orders of convergence nicely follow the theoretical result from \eqref{eq:N:errLinf}.

\begin{table}
\centering
\begin{tabular}{ccccc}
\hline
$h$ & $\eta^{L^2}_h$ & EOC of $\eta_h^{L^2}$ & $ \eta^{L^\infty}_h$ &  EOC of  $ \eta_h^{L^\infty}$ \\
\hline
$0.5^4$ & $6.17 \cdot 10^{-3}$ & $-$    & $9.47 \cdot 10^{-3}$  & $-$ \\
$0.5^5$ & $1.50 \cdot 10^{-3}$ & $2.04$ & $2.39 \cdot 10^{-3}$  & $1.99$ \\
$0.5^6$ & $3.76 \cdot 10^{-4}$ & $2.00$ & $5.96 \cdot 10^{-4}$  & $2.00$ \\
$0.5^7$ & $9.11 \cdot 10^{-5}$ & $2.05$ & $1.43 \cdot 10^{-4}$  & $2.06$ \\
\hline
\multicolumn{2}{l}{expected} & $2$ & & $2-\delta$ \\
\hline
\end{tabular}
\caption{Numerical results for $\Omega_{\frac{1}{2}\pi}$.}
\label{fig:largestangleinfluenceDomain1}
\end{table}

\begin{table}
\centering
\begin{tabular}{ccccc}
\hline
$h$ & $\eta^{L^2}_h$ & EOC of $\eta_h^{L^2}$ & $\eta^{L^\infty}_h$  & EOC of $\eta_h^{L^\infty}$ \\
\hline
$0.5^4$ & $3.91 \cdot 10^{-3}$ & $-$    & $1.17 \cdot 10^{-2}$ & $-$ \\
$0.5^5$ & $9.37 \cdot 10^{-4}$ & $2.06$ & $4.61 \cdot 10^{-3}$ & $1.34$ \\
$0.5^6$ & $2.29 \cdot 10^{-4}$ & $2.03$ & $1.81 \cdot 10^{-3}$ & $1.35$ \\
$0.5^7$ & $5.51 \cdot 10^{-5}$ & $2.06$ & $6.85 \cdot 10^{-4}$ & $1.40$ \\
\hline
\multicolumn{2}{l}{expected} & $2$  & & $\frac{4}{3}-\delta$ \\
\hline
\end{tabular}
\caption{Numerical results for $\Omega_{\frac{3}{4}\pi}$.}
\label{fig:largestangleinfluenceDomain2}
\end{table}

\begin{table}
\centering
\begin{tabular}{ccccc}
\hline
$h$ & $\eta^{L^2}_h$ & EOC of $\eta_h^{L^2}$ & $\eta^{L^\infty}_h$ &  EOC of $\eta_h^{L^\infty}$ \\
\hline
$0.5^4$ & $2.48 \cdot 10^{-3}$ & $-$    & $8.64 \cdot 10^{-3}$ & $-$ \\
$0.5^5$ & $6.41 \cdot 10^{-4}$ & $1.95$ & $3.65 \cdot 10^{-3}$ & $1.24$ \\
$0.5^6$ & $1.62 \cdot 10^{-4}$ & $1.98$ & $2.00 \cdot 10^{-3}$ & $0.87$ \\
$0.5^7$ & $3.94 \cdot 10^{-5}$ & $2.04$ & $9.22 \cdot 10^{-4}$ & $1.11$ \\
\hline
\multicolumn{2}{l}{expected} & $2$ & & $\frac{18}{17}-\delta$ \\
\hline 
\end{tabular}
\caption{Numerical results for $\Omega_{\frac{17}{18}\pi}$.}
\label{fig:largestangleinfluenceDomain3}
\end{table}

\bibliographystyle{apalike}
\bibliography{HafKPV_NumAna_Obstacle}

\end{document}